\newcommand{\pref}[1]{{\rm (\ref{#1})}}
\def\jaune{\textcolor{yellow}}
\def\bleu{\textcolor{blue}}
\def\rouge{\textcolor{red}}
\def\vert{\textcolor{green}}
\def\auteur#1{{\sc #1}}
\def\titreref#1{{\em #1}}
\def\vol#1{{\bf #1}}
\newtheorem{theorem}{\bleu{Theorem}}[section]
\newtheorem{lemma}[theorem]{\bleu{Lemma}}
\newtheorem{proposition}[theorem]{\bleu{Proposition}}
\newtheorem{definition}[theorem]{Definition}
\newtheorem{conjecture}[theorem]{\bleu{Conjecture}}
\numberwithin{equation}{section}
\newcommand{\monem}[1]{\emph{\bleu{#1}}}
\newcommand{\D}{\mathcal{D}}
\newcommand{\poly}{\mathcal{R}_n^{(\ell)}}
\newcommand{\colored}{\mathcal{C}}
\renewcommand{\S}{\mathbb{S}}
\newcommand{\NN}{\mathbb{N}}
\newcommand{\QQ}{\mathbb{Q}}
\newcommand{\bz}{{\bf 0}}
\newcommand{\Gpol}[1]{G\big[ \begin{smallmatrix}  #1  \end{smallmatrix}\big]}
\newcommand{\Mpol}[1]{M\big[ \begin{smallmatrix}  #1  \end{smallmatrix}\big]}
\newcommand{\JW}[1][]{J_{#1}}
\newcommand{\tr}{{\rm tr}}
\newcommand{\DQcoinv}[1][W]{\mathcal{Q}_{#1}}
\newcommand{\DQcoinvd}[1][d]{\mathcal{Q}_{W,#1}}
\newcommand{\DQW}[1][W]{\mathcal{S}_{#1}}
\newcommand{\Id}{\mathrm{Id}}
\newcommand{\p}{\partial}
\newcommand{\X}{X}
\newcommand{\PX}[1][\X]{\partial_{#1}}
\newcommand{\mq}[1][q]{{\mathbf{#1}}}
\title[co-quasi-invariant spaces]{\bleu{Co-quasi-invariant spaces\\ 
for finite\\ complex reflection groups}}
\author{J.-C. Aval} 
\address{LaBRI, Universit\'e de Bordeaux 1, CNRS, 351 cours de la Lib\'eration, 33405 Talence, France.}
\author{F. Bergeron}
\address{D\'epartement de Math\'ematiques, UQAM,  C.P. 8888, Succ. Centre-Ville, 
 Montr\'eal,  H3C 3P8, Canada.}
\begin{document}
\maketitle

\begin{abstract} We study, in a global uniform manner, the quotient of the ring of polynomials in $\ell$ sets of $n$ variables, by the ideal generated by diagonal quasi-invariant polynomials for general permutation groups $W=G(r,n)$. We show that, for each such group $W$, there is an explicit universal symmetric function that gives the $\NN^\ell$-graded Hilbert series for these spaces. This function is universal in
that its dependance on $\ell$ only involves the number of variables it is calculated with. We also discuss the combinatorial implications of the observed fact that it affords an expansion as a positive coefficient polynomial in the complete homogeneous symmetric functions.
 
\end{abstract}

%\keywords{                A VENIR                        }

 {
   \setcounter{tocdepth}{1}
   \parskip=0pt
   \footnotesize
   \tableofcontents
 }
%%%%%%%%%%%%%%%%%%%%%%%%%%%%%%%%%%%%%%%%%%%%%%%%%

\section{Introduction}

For  rank $n$ classical families of finite complex reflection groups $W$, we contribute to the description of the  diagonal co-quasi-invariant space $\DQcoinv$ for $W$, in several (say $\ell$)  sets  of $n$ variables. Here, the use of the term diagonal refers to the fact that $W$ is considered as a diagonal subgroup of $W^\ell$, acting on the $\ell^{\rm th}$-tensor power $\poly$ of the symmetric algebra of the defining representation of $W$. 
Instead of the usual one, the action considered here is the so-called Hivert-action. Invariant polynomials under this action are known as quasi-invariants\footnote{Under the same name, an entirely different notion has been considered in~\cite{etingof, feigin}. However, the terminology of quasi-symmetric polynomials being well ingrained, it seems awkward to call their generalization to other reflection groups by any other name then quasi-invariant.} (or quasi-symmetric for the symmetric group).
Our space $\DQcoinv^{(\ell)}$ is simply the quotient of $\poly$ by the ideal generated by constant-term-free quasi-invariants for $W$.
We show that the associated multigraded Hilbert series, denoted $\DQcoinv^{(\ell)}(q_1,\ldots,q_\ell)$ (which is symmetric in the $q_i$),  can be described in an uniform manner as a positive coefficient linear combination of Schur polynomials
    \begin{equation}\label{schur_generic}
              \bleu{\DQcoinv^{(\ell)}(q_1,\ldots,q_\ell) = \sum_\mu c_\mu\, s_\mu(q_1,\ldots,q_\ell),}
       \end{equation}
 with the $c_\mu$ independent of $\ell$, and $\mu$ running through a finite set of integer partitions that depend only on the group $W$. This a typical phenomena in many similar situations such as considered in~\cite{bergeron_several}. It has the striking feature that we can give explicit formulas for the dimension of $\DQcoinv^{(\ell)}$ for all $\ell$. To see why this is so striking, it may be worthwhile to recall that, for the entirely analogous context of diagonal co-invariant spaces (i.e. the one corresponding to the usual diagonal action of $W$ on $\poly$), a large body of work has only recently settled the special case $\ell=2$, but that we know almost nothing yet for $\ell\geq 3$.

\section{Our context}
A down to earth description of our context may be given as follows.
Consider a  $\ell \times n$ matrix of variables $X:=(x_{ij})$. For any fixed $i$ (a row of $X$), we say that the variables \bleu{$x_{i1},x_{i2},\ldots, x_{in}$} form the $i^{\rm th}$ \monem{set of variables}.
In some instances it is worth simplifying this notation, and write
   $$X= \begin{pmatrix}  
         x_1 & x_2 & \cdots & x_n\\
         y_1 & y_2 & \cdots & y_n\\
         \vdots &\vdots &\ddots &\vdots\\
          z_1 & z_2 & \cdots & z_n\\
              \end{pmatrix} .$$
 Thus, $x=x_1,\ldots,x_n$ stands for the first set of variables, $y=y_1,\ldots,y_n$ for the second set, $\ldots$, and $z=z_1,\ldots,z_n$ for the last set.
 
 With the aim of certain describing polynomials in the variables $X$, we choose to  denote by $X_j$ the $j^{\rm th}$ column of $X$, for $1\leq j\leq n$. We assume the same convention for any $\ell \times n$ matrix of non-negative integers $A$. Moreover, if the $A_i$ are the columns of $A$, we write
     $$\bleu{A=A_1A_2\cdots A_n}.$$
We then consider the monomials
     $$X_j^{A_j}:=\prod_{i=1}^\ell x_{ij}^{a_{ij}},\qquad \hbox{as well as}\qquad X^A:=\prod_{j=1}^n X_j^{A_j}.$$
For the monomials $X^A$, who clearly form a basis of space of polynomials $\poly:=\QQ[X]$, the corresponding \monem{degree vector}:
     $$\deg(X^A):=\sum_j A_j,$$
lies in $\NN^\ell$.

Given $r,n\in \NN^+$, recall that the
\monem{generalized symmetric group} $W=G(r,n)$ may be described as the
group of $n\times n$ matrices having exactly one non zero coefficient in
each row and each column, which  is a $r^{\rm th}$ root of unity.
One usually considers $W$ as acting on polynomials in $\poly=\QQ[X]$  by 
replacement of the variables by the matrix $X\,w$. 
With this point of view, we may consider that $W$ is generated by the \monem{transpositions} $s_j$, which exchange columns $j$ and $j+1$ in $X$, together with $s_0$ which multiplies the first column of $X$ by a (chosen) primitive $r^{\rm th}$ root of unity. 
These generators $s_j$ satisfy the usual Coxeter relations for $j\geq 1$:
\begin{eqnarray*}
    s_j^2&=&\Id,\qquad (s_j\,s_{j+1})^3=\Id,\quad\mathrm{and}\\
     s_j\,s_k&=&s_k\,s_j,\quad\mathrm{when}\quad |j-k|>1.
  \end{eqnarray*}
For the special pseudo-reflection $s_0$,  we have $s_0^r=\Id$ and $(s_0\,s_1)^{2\,r}=\Id$. 
This is the diagonal action which is considered for the ``usual'' definition of the diagonal co-invariant space for $W$ (see~\cite{livre}). Rather than this space, we consider a variant below.

Our point of departure from the ``classical'' situation is to consider rather the diagonal quasi-invariant polynomials for $W$. For example, taking $W=G(1,3)$ (the symmetric group $\mathbb{S}_3$) and $\ell=1$, we have the quasi-invariant (or quasi-symmetric) polynomials:
$$\begin{array}{lll}
x_{{1}}+x_{{2}}+x_{{3}},&
{x_{{1}}}^{2}+{x_{{2}}}^{2}+{x_{{3}}}^{2},&
{x_{{1}}}^{3}+{x_{{2}}}^{3}+{x_{{3}}}^{3},\\
x_{{1}}x_{{2}}+x_{{1}}x_{{3}}+x_{{2}}x_{{3}},&
x_{{1}}{x_{{2}}}^{2}+x_{{1}}{x_{{3}}}^{2}+x_{{2}}{x_{{3}}}^{2},&
{x_{{1}}}^{2}x_{{2}}+{x_{{1}}}^{2}x_{{3}}+{x_{{2}}}^{2}x_{{3}}.\\
\end{array}$$
For $\ell=2$, another  $\mathbb{S}_3$-quasi-invariant  is the polynomial
    $$y_{{1}}x_{{2}}y_{{2}}+y_{{1}}x_{{3}}y_{{3}}+y_{{2}}x_{{3}}y_{{3}}.$$
The vector space of diagonally quasi invariant for $W=G(r,n)$ is spanned by the \monem{monomial basis} $\{M_A\}_{A\in \mathcal{B}_{r,n}}$, which are  indexed by \monem{$r$-composition-matrices}. These are the positive integer entries $\ell\times k$ matrices, $1\leq k\leq n$,  having all column sums congruent to $0$ mod $r$, with no column sum actually vanishing. We  say that we have an \monem{$r$-matrix} if this last condition is dropped.
The monomial quasi-invariant associated to such a $r$-composition-matrix is simply defined as
   $$M_A:=\sum_{Y\subseteq X} Y^A,$$
with $Y$ running over all matrices obtained by selecting (in the order that they appear) $k$ columns of $X$. We sometimes write $M[A]$ for $M_A(X)$. It is easy to check directly that this is indeed
a basis. For example,  we have
$$M\Big[ \begin{smallmatrix}  
       1&3\\
       0&1\\
       2&0  
             \end{smallmatrix}\Big]=\sum_{a<b} x_{a}\,z_{a}^2\,x_{b}^3\,y_{b}.$$              
We simply denote by \bleu{$\JW[W]$} (or often simply by \bleu{$\JW$}) the ideal generated by constant-term-free diagonally $W$-quasi-invariant polynomials. 

We may now define our main object of study, which is the \monem{co-quasi-invariant} space:
\begin{equation}\label{def_dqcoinv}
    \DQcoinv^{(\ell)}:=\poly/\JW[W].
  \end{equation}
 To better analyze the structure of this space, we need to consider the action of the general linear group $GL_\ell$ on $\poly$, defined by
  \begin{equation}\label{action_GL}
    (f\cdot \tau)(X):=f(\tau\, X),\qquad {\rm for} \quad \tau \in  GL_\ell.
  \end{equation}
 Observing that the ideal $\JW=\JW[W]$ is invariant under this action, we conclude that $ \DQcoinv^{(\ell)}$ inherits a $GL_\ell$-module structure.
 Since the ideal $\JW$ is homogeneous for the vector-degree, $ \DQcoinv^{(\ell)}$ may be graded by this same vector-degree, i.e.:
     $$ \DQcoinv^{(\ell)} =\bigoplus_{d\in \NN^\ell} \DQcoinvd,$$
  with $\DQcoinvd$ denoting the homogeneous component of degree $d$ of $\DQcoinv^{(\ell)}$.
 It follows that the associated \monem{Hilbert series}, \bleu{ $ \DQcoinv^{(\ell)}(\mq)$}, coincides with the character of $ \DQcoinv^{(\ell)}$ as a $GL_\ell$-module. To help the reader parse this statement, let us assume that $\mathcal{B}$ is a basis consisting of homogeneous elements of $ \DQcoinv^{(\ell)}$. This means that, for $f(X)$ in $ \mathcal{B}$, we have
\begin{equation}\label{defn_homogene}
   f( \mq\, X) = \mq^d\, f(X),
\end{equation}
 where $\mq$ stands for the diagonal matrix 
    $$\mq=\begin{pmatrix} q_1 \\ 
                           &\ddots\\ 
                             &&q_\ell \end{pmatrix},$$
and $\mq^d:=q_1^{d_1}\cdots q_\ell^{d_\ell}$. Thus, an homogeneous $f(X)$ is an eigenvector of the linear transform $\mq^*$ sending $f(x)$ to $f(\mq\,X)$. Recall here that, by definition, the trace of $\mq^*$, as a function of the $q_i$, is the character of $ \DQcoinv^{(\ell)}$.
Summing up, the Hilbert series of the space, defined by the expression    
 \begin{equation}
      \DQcoinv^{(\ell)}(\mq):=\sum_{d\in \NN^\ell} \mq^d\,\dim(\DQcoinvd),
  \end{equation}
coincides with the (also usual) definition of the character of the corresponding (polynomial) representation of $GL_\ell$.

The point of this last observation is that  $ \DQcoinv^{(\ell)}(\mq)$  is \monem{Schur-positive}, since Schur functions $s_\mu(\mq)$ appear as characters of irreducible representations of $GL_\ell$. Indeed, the decomposition into irreducibles of the polynomial $GL_\ell$-representation $ \DQcoinv^{(\ell)}(\mq)$
gives a formula of the form~\pref{schur_generic}, with $\mu$ running through all partitions for which the homogeneous component $\DQcoinvd[\mu]$ is non-vanishing. For example,
 for the symmetric group, one finds the following expressions for $ \DQcoinv[n]:= \DQcoinv[{\mathbb{S}_n}]^{(\ell)}\!(\mq)$
\begin{eqnarray*}
 \DQcoinv[1]&=&1,\\
 \DQcoinv[2]&=&1+s_{{1}}(\mq),\\
 \DQcoinv[3]&=&1+2\,s_{{1}}(\mq)+2\,s_{{2}}(\mq),\\
 \DQcoinv[4]&=&1+3\,s_{{1}}(\mq)+5\,s_{{2}}(\mq)+2\,s_{{11}}(\mq)+5\,s_{{3}}(\mq),\\
 \DQcoinv[5]&=&1+4\,s_{{1}}(\mq)+9\,s_{{2}}(\mq)+5\,s_{{11}}(\mq)+14\,s_{{3}}(\mq)\\
      &&\qquad\qquad\qquad\qquad\qquad+10\,s_{{21}}(\mq)+14\,s_{{4}}(\mq).
\end{eqnarray*}
These examples exhibit the announced striking ``independence'' with respect to $\ell$. \
 
Before going on with our discussion, let us introduce another $GL_\ell$-module  which is isomorphic (both as a  $GL_\ell$-module and a $W$-module) to  the space $\DQcoinv^{(\ell)}$. For each of the variables $x_{ij} \in X$, consider the
partial derivation denoted by $\p_{x_{ij}}$, or $\p_{ij}$ for
short. For a polynomial $f(X)$, we then denote by $f(\PX)$  the
differential operator obtained by replacing the variables in $\X$ by the
corresponding derivation in $\PX$.  The space \bleu{$\DQW^{(\ell)}$} of
\monem{diagonally super-harmonic polynomials} with respect to $W$-quasi-invariants  is simply defined to be the set of 
polynomial solutions $g(X)$ of the system of partial differential
equations
\begin{equation}\label{defn_harmonic}
    f(\PX ) (g(X)) = 0,\qquad \hbox{for}\quad f(X)\in \JW\,.
\end{equation}
Evidently, we need only consider a generating set of $\JW$ for these equations to characterize all solutions. 
The elementary proof (see~\cite{livre}) that $\DQcoinv^{(\ell)}$ and $\DQW^{(\ell)}$ are isomorphic relies on the fact that there is a scalar product for which $\DQW^{(\ell)}$ appears as the orthogonal complement of $\JW[W]$.

Following our established conventions, \bleu{$\DQW^{(\ell)}(\mq)$} stands for the \monem{Hilbert series} of the graded space $\DQW^{(\ell)}$. From the above discussion, this is equal to the Hilbert series $\DQcoinv^{(\ell)}(\mq)$.  The advantage of working with $\DQW^{(\ell)}$ is that we may present a basis in terms of explicit polynomials (which give canonical representatives for equivalence classes in $\DQcoinv^{(\ell)}$).
%It is worth pointing out that the space $\DQW$ is a subspace of the space \bleu{$\mathcal{D}_W$} of \monem{diagonal $W$-harmonic polynomials} obtained %by restricting $f(X)$, in system of equations~\pref{defn_harmonic}, to be diagonal $W$-invariant (with respect to the usual action of $W$, rather than the %Hivert-action). 

To get a better feeling of how things work out, let us first consider the case $W=\S_3$ and $\ell=2$. We may then check that we have the following  bases $\mathcal{B}_d$ for the various homogeneous components $\mathcal{H}^{(2)}_{3,d}$ of the space $\mathcal{H}^{(2)}_{3}=\DQW^{(2)}$.
$$\begin{array}{rcl}
\mathcal{B}^{(2)}_{00}&=&\{ 1 \}, \\
\mathcal{B}^{(2)}_{10}&=&\{ -x_{{1}}+x_{{2}},-x_{{1}}+x_{{3}} \}, \\
\mathcal{B}^{(2)}_{01}&=& \{ -y_{{1}}+y_{{2}},-y_{{1}}+y_{{3}} \}, \\
\mathcal{B}^{(2)}_{20}&=& \{ - \left( x_{{1}}-x_{{2}} \right)  \left( x_{{1}}
-2\,x_{{3}}+x_{{2}} \right) ,\\
 &&\qquad - \left( x_{{1}}-x_{{3}} \right)  \left( x_{{1}}-2\,x_{{2}}+x_{{3}} \right)  \}, \\
 \mathcal{B}^{(2)}_{11}&=&\{ x_{{2}}y_{{2}}-x_{{1}}y_{{2}}-x_{{3}}y_{{3}}+x_{{1}}y_{{3}}
          -y_{{1}}x_{{2}}+y_{{1}}x_{{3}},\\
 &&\qquad x_{{2}}y_{{3}}-x_{{3}}y_{{3}}+x_{{1}}y_{{1}}-y_{{1}}x_{{2}}-x_{{1}}y_{{2}}+y_{{2}}x_{{3}} \}, \\
\mathcal{B}^{(2)}_{02}&=& \{ - \left( y_{{1}}-y_{{2}} \right)  \left( y_{{1}}
-2\,y_{{3}}+y_{{2}} \right) ,\\
 &&\qquad  - \left( y_{{1}}-y_{{3}} \right) 
 \left( y_{{1}}-2\,y_{{2}}+y_{{3}} \right)  \} .
 \end{array}$$
Observe that we can calculate $\mathcal{B}^{(2)}_{0k}$ from  $\mathcal{B}^{(2)}_{k0}$ by exchanging all $x_i$ by the corresponding $y_i$. 

By contrast, for $\ell=3$, the space  $\DQW[3]^{(3)}=\DQW^{(3)}$ affords the following bases. For all $d$ of the form $jk0$, we may choose
$\mathcal{B}^{(3)}_{jk0}:=\mathcal{B}^{(2)}_{jk}$. To get the bases for the other non-vanishing components of  $\DQW^{(3)}$, we set
$\mathcal{B}^{(3)}_{j0k}$ equal to the set  of polynomials obtained by exchanging the $y_i$ by the corresponding $z_i$ for all elements of $\mathcal{B}^{(3)}_{jk0}$. In turn, we get $\mathcal{B}^{(3)}_{0jk}$ from $\mathcal{B}^{(3)}_{j0k}$, now exchanging the $x$-variables for the $y$-variables. It can then be checked that there are no other non-vanishing component in $\mathcal{H}^{(3)}_{3}$. One may also use Theorem~\ref{mainthm} to see this.
Our point here is that we get the two Hilbert series
\begin{eqnarray*}
  \mathcal{H}^{(2)}_{3}(q_1,q_2)&=&1+2\,(q_1+q_2) + 2\,(q_1^2+q_2^2+q_1q_2)\\
  \mathcal{H}^{(3)}_{3}(q_1,q_2,q_3)&=&1+2\,(q_1+q_2+q_3) + 2\,(q_1^2+q_2^2+q_3^2+q_1q_2+q_1q_3+q_2q_3)
\end{eqnarray*}
both taking the form $ \mathcal{H}_{3}(\mq)=1+2\,s_1(\mq)+2\,s_2(\mq)$, as announced.

\section{General results}
\begin{theorem}\label{mainthm}
   For any given complex reflection group $W=G(r,n)$,  the Hilbert series $\DQcoinv^{(\ell)}(\mq)$ affords an expansion in terms of Schur functions,  with       
     positive integer coefficients  that are independent of $\ell$, the sum being over the set of partitions of integers $d$:
     \begin{equation}\label{interval_entiers}
      \bleu{ 0\leq d \leq 2\,r\,n-r -n},
     \end{equation}
   and having at most $\bleu{n}$ parts.
\end{theorem}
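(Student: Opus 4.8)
The plan is to work throughout with the harmonic realization $\DQW^{(\ell)}$, which the preceding discussion identifies with the orthogonal complement of $\JW[W]$ inside $\poly$ and which carries the same graded $GL_\ell$-character as $\DQcoinv^{(\ell)}$. Schur-positivity being already granted, three things remain: the bound on the number of parts, the independence of the $c_\mu$ from $\ell$, and the degree bound \pref{interval_entiers}. For the number of parts I would invoke the classical $GL_\ell\times GL_n$ (Cauchy--Howe) duality on $\poly=\mathrm{Sym}(\CC^\ell\otimes\CC^n)$, namely $\poly\cong\bigoplus_\mu S_\mu(\CC^\ell)\otimes S_\mu(\CC^n)$ with $\mu$ ranging over partitions with at most $\min(\ell,n)$ parts. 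Since $\DQcoinv^{(\ell)}$ is a $GL_\ell$-equivariant quotient of $\poly$, every irreducible $S_\mu(\CC^\ell)$ that occurs already occurs in $\poly$, so each such $\mu$ has at most $n$ parts; this is exactly the claimed restriction.

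For the independence of $\ell$ I would count $c_\mu$ as the dimension of the space of $GL_\ell$-highest-weight vectors of weight $\mu$ inside $\DQW^{(\ell)}$, and set $k=\ell(\mu)$ for the number of parts of $\mu$. Under the $GL_\ell$-action $f\mapsto f(\mq\,X)$, the weight of a monomial is precisely its vector-degree, so a weight-$\mu$ vector has degree $0$ in each of the rows $k+1,\dots,\ell$ and is therefore a polynomial $g$ involving only the variables of the first $k$ rows. For such a $g$ the raising operators $\sum_{j=1}^n x_{aj}\,\partial_{x_{bj}}$ with $b>k$ annihilate $g$ automatically, so the highest-weight condition collapses to the one for $GL_k$. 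Moreover, because $g$ involves only the first $k$ rows, the operator $M_A(\PX)$ kills $g$ automatically whenever $A$ has a nonzero entry outside those rows, and otherwise coincides with the operator attached to the corresponding monomial quasi-invariant of $G(r,n)$ in $k$ sets of variables (the $r$-composition-matrix condition concerns only column sums and is unaffected by deleting zero rows). As every $k$-row quasi-invariant arises from an $\ell$-row one in this way, the equations $M_A(\PX)\,g=0$ over all generators of $\JW[W]$ in $\ell$ rows are equivalent to the analogous equations in $k$ rows. Hence the weight-$\mu$ highest-weight spaces of $\DQW^{(\ell)}$ and of $\DQW^{(k)}$ coincide, as subspaces of the polynomials in the first $k$ rows, for every $\ell\ge k$; so $c_\mu$ is computed once and for all at $\ell=\ell(\mu)$ and is independent of $\ell$.

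It remains to prove the degree bound, that is, that $\DQcoinv^{(\ell)}$ vanishes in every total degree exceeding $2\,r\,n-r-n$; by the previous paragraph it suffices to bound this total degree at $\ell=n$ (or, as should turn out, uniformly in $\ell$). The natural route is a standard-monomial analysis: fix a monomial order refining ``heavier on the rightmost columns,'' so that the leading monomial of $M_A=\sum_{Y\subseteq X}Y^A$ occupies the last columns, and describe the standard monomials of $\poly/\JW[W]$ by a suffix (staircase) condition on the column-degrees generalizing the $r=1$ description of Aval--Bergeron--Bergeron; the extremal standard monomial should then have total degree exactly $2\,r\,n-r-n$. The main obstacle lies precisely here: the leading terms of the generators $M_A$ alone do not cut out a finite-dimensional quotient, since their leading-term ideal only constrains the rightmost column, and so one must determine the full reduced Gröbner basis of $\JW[W]$ — producing the extra relations coming from the $S$-polynomials of the $M_A$ — and check that the resulting staircase closes off every column at the predicted degree. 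I would carry this out by exhibiting an explicit straightening of monomials against the $M_A$, proving simultaneously that every monomial of total degree $>2\,r\,n-r-n$ reduces to $0$ and that the surviving staircase monomials are linearly independent in the quotient; uniformity in $\ell$ would then be automatic, because the straightening rules involve only the $r$-composition-matrix combinatorics and not the number of rows.
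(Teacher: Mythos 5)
Your handling of two of the three claims is correct. The ``at most $n$ parts'' argument via Cauchy--Howe duality is exactly the paper's (one-sentence) argument: the restriction holds for $\poly$ itself, and $\DQW^{(\ell)}$ sits inside it as a $GL_\ell$-submodule. Your $\ell$-independence argument --- a highest-weight vector of weight $\mu$ has multidegree $\mu$, hence involves only the first $\ell(\mu)$ rows of variables, and the harmonicity equations $M_A(\PX)g=0$ for $\ell$-row $r$-composition matrices collapse to the $\ell(\mu)$-row equations because matrices $A$ supported outside those rows kill $g$ trivially and the remaining ones are precisely the $\ell(\mu)$-row $r$-composition matrices padded with zero rows --- is correct, and is in fact spelled out more explicitly than in the paper, which treats this point as part of the general formalism it imports from the multivariate coinvariant setting.

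The genuine gap is the degree bound \pref{interval_entiers}, which is the actual content of the paper's proof and which your proposal does not establish: you correctly diagnose that the leading monomials of the generators $M_A$ alone constrain too little, but you then only announce a program (``determine the full reduced Gr\"obner basis \ldots exhibit an explicit straightening'') without carrying out any step of it. Note moreover that this program demands more than is needed. Since the monomials outside the leading-term ideal of $\JW$ always \emph{span} the quotient, an upper bound on degrees needs neither a full Gr\"obner basis nor the linear independence of the surviving monomials; it suffices to produce, for every monomial of total degree $>2rn-r-n$, an element of $\JW$ whose leading monomial divides it. This is what the paper constructs: for each \emph{trans} $r$-matrix $A$ (one having a prefix $A_1\cdots A_j$ of $r$-size $\geq j$) it defines recursively an element $G[A]\in\JW$, by $G[A]=\sum_{B\in S(A)}M_B$ when $A$ is an $r$-composition, and $G[A]=G[BVC]-X_{j+1}^{\theta(V)}\,G[B\Delta(V)C]$ when $A=B\,\bz\,VC$, and Proposition~\ref{prop:Glex} (proved via Lemma~\ref{lem:recG} and the lemma following it) shows that the leading monomial of $G[A]$ is exactly $X^A$. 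The bound then follows from two elementary observations: every $r$-matrix of $r$-size $n$ is trans, and every monomial of degree $>2rn-r-n$ is divisible by $X^A$ for some $r$-matrix $A$ of $r$-size $n$. This recursive construction and its leading-term analysis are the missing idea; without them (or a worked-out substitute), your proposal proves only the ``at most $n$ parts'' and ``independent of $\ell$'' clauses of the theorem, not the degree interval.
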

To better underline one of the most important feature of this statement, we may consider that the symmetric function involved in these expressions are written in terms of infinitely many variables 
    $$\mq=q_1,q_2,q_3,\, \ldots$$
This makes formula~\pref{schur_generic} entirely independent of $\ell$.
To get the Hilbert series in the special case of $\ell$ sets of $n$ variables,  we  simply specialize this ``universal'' formula by setting all variables $q_k$, for $k>\ell$,  equal to zero. This process is made even more transparent by ``removing the variables'', writing $s_\mu$ (or $h_\mu$) instead of $s_\mu(\mq)$ (or $h_\mu(\mq)$)\footnote{As in Macdonald~\cite{macdonald}, we write $h_k$ for the complete homogeneous symmetric functions.} in \pref{schur_generic}. In other words, we consider $f(q_1,\ldots,q_\ell)$ as the evaluation, of a (variable free) symmetric functions $f$, inn the set of $\ell$ variables $q_1,\ldots, q_\ell$. We may also drop the $\ell$ in $\DQcoinv^{(\ell)}$.

The following formula, for the case $\ell=1$, is shown to hold in~\cite{aval_W}. Namely, for the group $W=G(r,n)$, we have
\begin{equation}
  \bleu{ \DQcoinv(q,0,0,\ldots)= \left(\frac{1-q^r}{1-q}\right)^{\!\!n} \cdot \sum_{k=0}^{n-1}\frac{n-k}{n+k}\binom{n+k}{k}\, q^{r\,k}}.
\end{equation} 
In particular, for $n=2$, we get
   $$ \DQcoinv(q,0,0,\ldots)=(1+q+\ldots+q^{r-1})^2\, (1+q^r).$$
Since $h_k(q,0,0,\ldots)=q^k$, this is readily seen to be the specialization at 
     $$\mq=q,0,0,\ldots$$ 
 of the following ``universal'' formula (see~\cite{bergeron_several}), for the groups $W=G(r,2)$:
      \begin{equation}\label{formule_Gr2}
         \bleu{ \left( \sum _{k=0}^{r-1}h_{{k}} \right) ^{\!\!2}+\sum _{k=0}^{r-1} \left( k+1 \right) h_{{r+k}}+\sum _{k=1}^{r-1} \left( r-k \right) h_{{2\,r-1+k}}}.
       \end{equation}
 As such, it holds for the diagonal co-invariant space (under the classical action) which, in this very specific case, coincides with the space of diagonal co-quasi-invariant space.

A nice feature of the expression given in~\pref{formule_Gr2} is its \monem{$h$-positivity}:
    $$\sum_\mu a_\mu\, h_\mu,\qquad {\rm with}\quad a_\mu\geq 0.$$
 This appears to hold for many other reflection groups, in particular  when $W$ is a symmetric group, leading us to state the following.
 \begin{conjecture}\label{conj}
    For the symmetric group, the  Hilbert series $\DQcoinv[n](\mq)$  is $h$-positive in degrees smaller than $n/2$.
  \end{conjecture}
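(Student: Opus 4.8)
The plan is to convert $h$-positivity into the statement that a certain symmetric-group representation is a genuine sum of Young permutation modules, and then to build that representation from the one-set data. Fix a total degree $d$. By the discussion preceding Theorem~\ref{mainthm}, the degree-$d$ part of $\DQcoinv[n]$ is the character of a polynomial $GL_\ell$-module; equivalently, by Schur--Weyl duality it corresponds to an $\sg[d]$-representation $V_{n,d}$ whose multiplicity of the Specht module $S^\lambda$ equals the Schur coefficient $c_\lambda$ in that degree. Since $\mathrm{ch}(S^\lambda)=s_\lambda$ while $\mathrm{ch}\big(\mathrm{Ind}_{\sg[\mu]}^{\sg[d]}\mathbf 1\big)=h_\mu$ for the Young subgroup $\sg[\mu]$, the degree-$d$ part is $h$-positive exactly when $V_{n,d}$ is isomorphic to a non-negative integer combination of the Young permutation modules $\mathrm{Ind}_{\sg[\mu]}^{\sg[d]}\mathbf 1$. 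To make $V_{n,d}$ tangible I would take $W=\sg$ with $\ell=d$ sets of variables and pass to the multilinear component $\mathcal M_{n,d}$ of $\DQW[\sg]^{(d)}$, i.e.\ the part of multidegree $(1,1,\dots,1)$, on which $\sg[d]$ acts by permuting the $d$ sets; a standard weight-space computation identifies $\mathcal M_{n,d}\cong V_{n,d}$ as $\sg[d]$-modules.

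The target is then the explicit formula, valid for $d<n/2$,
\[
   \big(\text{degree-}d\ \text{part of}\ \DQcoinv[n]\big)\;=\;\sum_{m}\,h_{\lambda(m)},
\]
the sum running over the one-set super-covariant monomials $m$ of degree $d$ from the standard-monomial basis of \cite{aval_W}, where $\lambda(m)$ records the exponents of $m$. The point is that polarising such an $m$ across the $d$ sets produces a family of multilinear monomials permuted by $\sg[d]$, whose stabiliser is the Young subgroup $\sg[\lambda(m)]$: transposing two sets assigned to the same original column leaves the (commutative, multilinear) monomial unchanged. Hence each $m$ contributes the orbit module $\mathrm{Ind}_{\sg[\lambda(m)]}^{\sg[d]}\mathbf 1$, of character $h_{\lambda(m)}$, and $h$-positivity becomes manifest because the coefficient of each $h_\mu$ is simply the number of super-covariant monomials of exponent type $\mu$. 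As a consistency check, note that $h_\mu(q,0,0,\ldots)=q^{|\mu|}$ for every $\mu$, so the specialisation $\mq=(q,0,0,\ldots)$ collapses the right-hand side to the count of degree-$d$ super-covariant monomials, which is the ballot number $\tfrac{n-d}{n+d}\binom{n+d}{d}$ given by the $\ell=1$ formula recalled above; the finer partition of that count by exponent type is then seen to reproduce the $h$-coefficients of the tabulated series $\DQcoinv[2],\dots,\DQcoinv[5]$.

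Establishing this identity reduces to a single polarisation theorem: for $d<n/2$ the row-polarisations of the degree-$d$ super-covariant monomials remain linearly independent modulo $\JW[\sg]$, and thus descend to a basis of $\mathcal M_{n,d}$. I would prove the spanning half by showing that $\DQW[\sg]$, as a $GL_\ell$-module, is generated by its one-set component under polarisation (a property one expects for these coinvariant-type spaces), so that the polarised monomials at least span $\mathcal M_{n,d}$. For independence I would argue by a separation-of-supports or leading-term comparison: in multidegree $(1^d)$ with $2d\le n$ there are enough columns that the supports of distinct polarisation orbits can be kept disjoint, so no relation of $\JW[\sg]$ can identify them; equivalently, the reduction modulo $\JW[\sg]$ in degree $d$ agrees with its $n\to\infty$ stable limit, which Theorem~\ref{mainthm} already controls.

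The main obstacle is precisely this independence statement, because the diagonal quasi-invariant ideal $\JW[\sg]$ in several sets of variables has no known Gröbner basis or standard-monomial description, unlike the one-set case; what must be ruled out is that an elementary-symmetric-type relation such as $M_{(1^d)}=e_d$ couples two distinct polarisation orbits and twists a stabiliser into a non-Young subgroup, which would break $h$-positivity. I expect the bound $d<n/2$ to be exactly the range in which no such relation is active---$2d$ being the number of columns first required to couple two disjoint degree-$d$ supports---so that the method should be sharp and the phenomenon genuinely stops near $d=n/2$. Failing a uniform argument, I would calibrate the orbit-to-partition correspondence $m\mapsto\lambda(m)$ against the explicit $G(r,2)$ series~\pref{formule_Gr2} and the tabulated $\DQcoinv[n]$, and then attempt an induction on $n$ realising the passage $n\to n+1$ as the addition of a manifestly $h$-positive family of new super-covariant monomials within the stable range.
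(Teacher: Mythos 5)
First, a point of order: the statement you set out to prove is an \emph{open conjecture} in this paper. The authors prove nothing towards it; they only derive a conditional consequence (Proposition~\ref{mainprop}), they observe that unrestricted $h$-positivity genuinely fails at $n=7$, and they state explicitly that no rule assigning a partition $\mu(\beta)$ to each Dyck path $\beta$ compatible with the tabulated values \pref{valeurs_pour_Sn} is known. So your argument must stand entirely on its own, and it does not: both pillars of your reduction (spanning and linear independence of the polarised monomials modulo $\JW[\sg]$) are left as steps you ``would'' prove or ``expect''.

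More importantly, the explicit formula at the heart of your proposal is false, and the paper's own data refutes it \emph{inside} the range $d<n/2$. Take $n=5$, $d=2<5/2$. The standard monomial basis of the one-set quotient (\cite{ABB}, the $r=1$ case of \cite{aval_W}) consists in degree $2$ of the monomials of exponent type $(1,1)$ (six of them) and of type $(2)$ (three of them), so your formula predicts a degree-$2$ contribution of $3h_2+6h_{11}$; but the paper records $\DQcoinv[5]=1+4h_1+4h_2+5h_{11}+\cdots$, i.e.\ $4h_2+5h_{11}$. Your consistency check could not detect this because the specialisation $\mq=(q,0,0,\ldots)$ only sees the total $3+6=4+5=9$ and cannot distinguish $h_2$ from $h_{11}$. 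The discrepancy is systematic: by \pref{formule_hilb} the stable degree-$2$ coefficients are $n-1$ for $h_2$ and $\tfrac{1}{2}n(n-3)$ for $h_{11}$, whereas the exponent-type count gives $n-2$ and $\binom{n-1}{2}$; the totals agree, the distributions do not. The structural reason is visible in the paper's own $n=3$, $\ell=2$ comparison of the two ideals: the antisymmetric polynomial $\sum_{i<j}(x_iy_j-x_jy_i)$ is a \emph{difference of two diagonal quasi-invariants}, hence lies in $\JW[\sg]$, and a dimension count in multidegree $(1,1)$ shows it forces exactly one relation among your polarised monomials for every $n$ --- i.e.\ in degree $2$, far inside $d<n/2$. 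So the orbit-coupling you hoped the bound $d<n/2$ would exclude is active already in degree $2$, independence fails, and each such relation converts an $h_{11}$ into an $h_2$. What your exponent-type formula actually computes is the Hilbert series of the \emph{colored} quotient $\colored_n$ of Proposition~\ref{prop_col}, where the ideal $K$ contains only the coarser sums (e.g.\ $\sum_{i\le j}x_iy_j$) and not the antisymmetric combination; conflating $\colored_n$ with $\DQcoinv[n]$ is precisely the distinction the authors emphasize, and it is why finding the correct rule $\beta\mapsto\mu(\beta)$ remains, as they say, open.
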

An immediate consequence of this conjecture is that $\DQcoinv[n](\mq)$  has to have a very specific form, since 
    \begin{eqnarray}
         \DQcoinv[n](q,0,0,\ldots)&=&\sum_\mu a_\mu\, h_\mu(q,0,0,\ldots)\nonumber \\
                            &=&\sum_\mu a_\mu\,q^{|\mu|}\nonumber \\
                            &=&\sum_{k=0}^{n-1}\frac{n-k}{n+k}\binom{n+k}{k}\, q^{k},\label{formule_q1}\\
                            &=&\sum_\beta q^{\chi(\beta)},\label{formule_q2}
    \end{eqnarray}
with $\beta$ running over the set of Dyck paths\footnote{See section~\ref{sec_colored} for more details.} of height $n$, and $\chi(\beta)$ taking as value the $x$-coordinate of the first point of the path at height $n$. 
The passage from \pref{formule_q1} to \pref{formule_q2} is classical.
It follows that 
\begin{proposition}\label{mainprop}
  Conjecture~\ref{conj} implies that 
$ \DQcoinv[n](\mq)$  affords an expression of the form
     \begin{equation}\label{formule_DQcoinv}
          \bleu{\DQcoinv[n](\mq)=_{\leq n/2} \sum_\beta h_{\mu(\beta)}(\mq)},
       \end{equation}
   with $\beta$  running over the set of all Dyck paths of height $n$, and $\mu(\beta)$ some partition of the integer $\chi(\beta)$. Here $=_{\leq n/2}$ stands for equality in degrees less or equal to $n/2$.
\end{proposition}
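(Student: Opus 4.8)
The plan is to obtain the statement by a pure specialization-and-bookkeeping argument, taking Conjecture~\ref{conj} as the sole input. Assuming the conjecture, in the low degrees where $h$-positivity is guaranteed we may write
\begin{equation*}
   \DQcoinv[n](\mq) =_{\le n/2} \sum_\mu a_\mu\, h_\mu(\mq),\qquad a_\mu\ge 0.
\end{equation*}
First I would observe that the $a_\mu$ are in fact nonnegative \emph{integers}. Indeed, $\DQcoinv[n](\mq)$ is Schur-positive with integer coefficients, being a $GL_\ell$-character, and the change of basis expressing each $s_\lambda$ in terms of the $h_\mu$ is unitriangular with integer entries (the inverse Kostka matrix). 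Hence the $h$-coefficients of any integral Schur-positive series are automatically integers, and Conjecture~\ref{conj} supplies their nonnegativity.

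Next I would specialize at $\mq = q,0,0,\ldots$. Since $h_k(q,0,0,\ldots)=q^k$, we have $h_\mu(q,0,0,\ldots)=q^{|\mu|}$, so the $h$-expansion collapses to $\sum_\mu a_\mu\, q^{|\mu|}$. By the $\ell=1$ formula of~\cite{aval_W} together with the classical passage from~\pref{formule_q1} to~\pref{formule_q2}, this univariate series equals $\sum_\beta q^{\chi(\beta)}$, the sum running over Dyck paths of height $n$. Comparing the coefficient of $q^k$ on both sides, for each $k\le n/2$, yields
\begin{equation*}
   \sum_{|\mu|=k} a_\mu \;=\; \#\{\beta \suchthat \chi(\beta)=k\}.
\end{equation*}

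Finally I would conclude by a counting match. For each $k\le n/2$, the multiset consisting of each $\mu$ repeated $a_\mu$ times and the set of Dyck paths $\beta$ with $\chi(\beta)=k$ have equal cardinality; choosing any bijection $\beta\mapsto\mu(\beta)$ between them produces, for every such $\beta$, a partition $\mu(\beta)$ of $\chi(\beta)=k$. For the remaining paths, those with $\chi(\beta)>n/2$, I set $\mu(\beta)$ to be any partition of $\chi(\beta)$; the associated $h_{\mu(\beta)}$ have degree $>n/2$ and so do not contribute to the truncated comparison. Summing $h_{\mu(\beta)}$ over all Dyck paths then reproduces $\sum_\mu a_\mu\,h_\mu$ in degrees $\le n/2$, which is exactly~\pref{formule_DQcoinv}.

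The argument involves no hard analysis, precisely because the proposition is conditional on Conjecture~\ref{conj}; its entire content is the specialization at a single variable and the matching of univariate coefficients. The only points requiring a little care are the integrality and nonnegativity of the $a_\mu$, which legitimize reindexing the $h$-terms by Dyck paths, and the bookkeeping ensuring the comparison is valid exactly in the range $\le n/2$ where the conjecture applies. The assignment $\mu(\beta)$ is of course far from canonical: only the multiset of partitions and their sizes are forced, which is all the statement claims.
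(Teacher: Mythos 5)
Your proof is correct and takes essentially the same route as the paper: the paper deduces the proposition directly from the displayed chain $\DQcoinv[n](q,0,0,\ldots)=\sum_\mu a_\mu\,q^{|\mu|}=\sum_\beta q^{\chi(\beta)}$ preceding the statement, which is exactly your specialization-and-coefficient-matching argument. Your added care about the integrality of the $a_\mu$ and about paths with $\chi(\beta)>n/2$ only makes explicit what the paper leaves implicit.
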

  As of this writing, we do not have a rule for producing the partition $\mu(\beta)$ associated to $\beta$, which would haver to be compatible with the actual values given in~\pref{valeurs_pour_Sn}. 
  
%%%%%%%%%%%%%%%%%%%%%%%%%%%%%%%%%%%%%%%%%
\section{Formulas for low degree components}
We discuss now how to get explicit polynomial formulas in the variable $n$ for the coefficient of $h_\mu$, when $\mu$ is a partition of a small enough integer. 
We restrict the discussion to the case $W=\S_n$, but much of it holds in generality. 
We exploit here the fact that low degree homogeneous components of the spaces $\mathcal{R}_n$
and $ \DQcoinv[n]\otimes \mathcal{R}_n^{\sim \S_n}$ are isomorphic, where $ \mathcal{R}_n^{\sim \S_n}$ stands for the ring of diagonal quasi-symmetric polynomials. 
This immediately implies that we have explicit formulas for the relevant homogeneous components of   $ \DQcoinv[n]$, since we have the explicit expressions
\begin{equation}
    \bleu{\mathcal{R}_n(\mq)= (1+H(\mq))^n},\qquad {\rm and}\qquad \bleu{\mathcal{R}_n^{\sim \S_n}(\mq)= \frac{1}{1-H(\mq)}},
\end{equation}
where $H(\mq):=\sum_{k\geq 1} h_k(\mq)$.
It follows that we may calculate the low degree terms of the Hilbert series $\DQcoinv[n](\mq)$, via the expansion
  \begin{equation}\label{formule_hilb}
  \begin{array}{rcl}
      \bleu{\displaystyle (1+H(\mq))^n  (1-H(\mq))}&=&\bleu{ 1+(n-1) h_1+(n-1) h_2+\frac{1}{2}\,n(n-3) h_1^2+(n-1) h_3}\\[6pt]
         &&\qquad \bleu{+\frac{1}{6}\,n(n-1) (n-5) h_1^3+n(n-3) h_1h_2+\ldots}
  \end{array}
\end{equation}
Observe that the coefficients for the various $h_\mu$ in $\DQcoinv[n](\mq)$ agree with those in the right-hand side of \pref{formule_hilb}, whenever $n\geq 5$.  This phenomenon seems to hold for $n$ larger then twice the order of terms calculated.     
  
Let us write $k(\mu)$ for the number of parts of a partition $\mu$, and denote by
 \begin{displaymath}
    d_\mu:=d_1! d_2! \cdots d_n!
      \end{displaymath}
  the product of the factorials of multiplicities of parts in $\mu$. Here $d_i$ is the multiplicity of the part $i$.
We then easily calculate that the coefficient of $h_\mu(\mq)$, in the right hand side of \pref{formule_hilb}, can be written in the form
    \begin{equation}
         \bleu{\frac{(n)_{k(\mu)}}{d_\mu}-\sum_{\nu} \frac{(n)_{k(\nu)}}{d_\nu}},
  \end{equation}
where the summation is over the set of partitions that can be obtained by removing one part of $\mu$. As usual, we denote by $(n)_k$ the product
  \begin{displaymath}
     \bleu{(n)_k:=n(n-1)\cdots (n-k+1)}.
  \end{displaymath}
  Another way of looking at all this is to say that the coefficient of a given $h_\mu$ stabilizes to a positive valued polynomial in $n$, as $n$ grows to be large enough.

%%%%%%%%%%%%%%%%%%%%%%%%%%%%%%%%%%%%%%%%%%%%%%%%%
\subsection*{Explicit values}
Explicit calculations give the following $h$-positive expressions,   in the case of the symmetric group.
\begin{equation}\label{valeurs_pour_Sn}
\begin{array}{rcl}
\DQcoinv[1]&=&1\\
\DQcoinv[2]&=&1+h_{{1}},\\
\DQcoinv[3]&=&1+2\,h_{{1}}+2\,h_{{2}},\\
\DQcoinv[4]&=&1+3\,h_{{1}}+3\,h_{{2}}+2\,{h_{{1}}}^{2}+5\,
h_{{3}},\\
\DQcoinv[5]&=&1+4\,h_{{1}}+4\,h_{{2}}+5\,{h_{{1}}}^{2}+4\,
h_{{3}}+10\,h_{{1}}h_{{2}}+14\,h_{{4}},\\
\DQcoinv[6]&=&1+5\,h_{{1}}+5\,h_{{2}}+9\,{h_{{1}}}^{2}+5\,
h_{{3}}+18\,h_{{1}}h_{{2}}+5\,{h_{{1}}}^{3}\\ &&\qquad\qquad+28\,h_{{3}}h_{{1}}+14\,{h_
{{2}}}^{2}+42\,h_{{5}}.
\end{array}
\end{equation}
From this it would be tempting (as we did in a first draft of this paper) to conjecture that $\DQcoinv[n]$ is always $h$-positive, but (as shown by recent calculations of H.~Blandin and F.~Saliola) this fails at $n=7$, indeed we get
\begin{eqnarray}
   \DQcoinv[7]&=&1+6\,h_{{1}}
                              +14\,{h_{{1}}}^{2}+6\,h_{{2}}
                               +14\,{h_{{1}}}^{3}+28\,h_{{1}}h_{{2}}+6\,h_{{3}}\\
                               &&\qquad
                               +42\,h_{{2}}{h_{{1}}}^{2}+14\,{h_{{2}}}^{2}+28\,h_{{1}}h_{{3}}+6\,h_{{4}}\\
                               &&\qquad
                            +84\,h_{{3}}h_{{2}}+84\,h_{{4}}h_{{1}}-36\,h_{{5}}+132\,h_{{6}}.
\end{eqnarray}
Observe the coefficient of $h_5$.
For the groups $W=G(2,n)$, which is the hyperoctahedral group $B_n$, we do have the $h$-positive expressions
\begin{eqnarray*}
\DQcoinv[G(2,2)]&=&1+2\,h_{{1}}+  h_{{2}}+{h_{{1}}}^{2} +2\,h_{{3}}+h_{{4}},\\
\DQcoinv[G(2,3)]&=& 1+3\,h_{{1}}+   2\,h_{{2}}+3\,{h_{{1}}}^{2} +
 3\,h_{{3}}+{h_{{1}}}^{3}+3\,h_{{1}}h_{{2}}  \\
 &&\quad +
 6\,h_{{3}}h_{{1}}+2\,h_{{4}} + 3\,h_{{1}
}h_{{4}}+5\,h_{{5}}  +6\,h_{{6}} +2\,h_{{7}}.
\end{eqnarray*}
Explaining when we do have $h$-positivity is still somewhat mysterious.

%%%%%%%%%%%%%%%%%%%%%%%%%%%%%%%%%%%%%%%%%%%%%%%%%
\section{Colored quasi-symmetric polynomials}\label{sec_colored}
In light of the results and conjecture considered above, we think it worthwhile to reformulate results obtained in \cite{aval_color} from this new perspective. Indeed, the relevant formulas take a new and much nicer format which gives indirect support to our conjecture,
since the Hilbert series considered happen to be provably $h$-positive. This was not noticed at the time of the writing of  \cite{aval_color}.

Let us consider the subspace of \monem{colored\,}\footnote{These where coined to be the $G(\ell,n)$-quasi-symmetric polynomials (or even $B$-quasi-symmetric when $\ell=2$) in~\cite{aval_color,BH}, but this terminology leads to confusion in the present context.} quasi-symmetric polynomials  of the space of diagonal $\S_n$-quasi-invariants (in the context of $X$ being $\ell\times n$ matrix of variables). 
Contrary to our previous presentation, colored quasi-symmetric polynomials are not defined as invariants. They are rather described in terms of a basis, indexed by ``colored composition''. Recall that 
{\em colored compositions} of length $p$ are $\ell\times p$-matrices 
 $$C= \begin{pmatrix}  
         c_{11} & c_{12} & \cdots & c_{1p}\\
         c_{21} & c_{22} & \cdots & c_{2p}\\
         \vdots &\vdots &\ddots &\vdots\\
         c_{\ell1} & c_{\ell2} & \cdots & c_{\ell p}\\
             \end{pmatrix}
 $$
 with non negative entries, and
such that the associated \monem{entries reading word}   (obtained by reading column by column from left to right, and each column from top to bottom) avoids the pattern of $\ell$ consecutive zeros.

To each colored composition $C$, we associate a \monem{monomial colored quasi-symmetric functions} by setting:
\begin{equation}\label{col_mon}
   \overline{M}_C:=\sum\prod_{1\le i\le \ell}\ \prod_{1\le k\le p} x_{i,a_{ik}}^{c_{ik}}
\end{equation}
where the sum is over all choices of $a_{ik}$ such that 
\begin{eqnarray*}
   a_{ik}&\le& a_{i+1,k},\qquad  {\rm when}\ 1\le i<\ell, \qquad  {\rm and}\\
    a_{\ell k}&<&a_{1,k+1},\qquad  {\rm for}\ 1\le k<p.
 \end{eqnarray*}
For example, we have
 $$  \overline{M}\Big[ \begin{smallmatrix}  
       1&3\\
       0&1\\
       2&0  
             \end{smallmatrix}\Big]=\sum_{a\le b<c \le d} x_{a}\,z_{b}^2\,x_{c}^3\,y_{d}.$$              
An example helps us point out the difference between diagonal $\S_n$-quasi-invariants and colored quasi-symmetric polynomials.
For $n=3$ and $\ell=2$, we have the three independent diagonal quasi-symmetric polynomials of degree $2$: 
$$\begin{array}{l}
   x_1y_1+x_2y_2+x_3y_3, \\
   x_1y_2+x_1y_3+x_2y_3,\qquad  {\rm and}\\
    x_2y_1+x_3y_1+x_3y_2,
 \end{array}$$
whereas we have only two
$$\begin{array}{l}
   x_1y_1+x_2y_2+x_3y_3+x_1y_2+x_1y_3+x_2y_3,\qquad  {\rm and}\\ 
    x_2y_1+x_3y_1+x_3y_2,
 \end{array}$$
colored quasi-symmetric polynomials in same degree.

We denote by $K$ the ideal generated by constant-term-free colored quasi-symmetric polynomials assuming that the parameters 
$n$ and $\ell$ are unambiguous from the context, and introduce the quotient of the ring of polynomials by the ideal $K$:
\begin{equation}\label{def_dqcoinv_col}
    \colored_n^{(\ell)}:=\poly/K.
  \end{equation}
The main result of \cite{aval_color} may be elegantly re-coined in terms of an $h$-positive expansion for the Hilbert series of  $\colored_n=\colored_n^{(\ell)}$. 
This new formulation has the extra advantage that it has similarities with the formula that we would expect to find under the hypothesis that conjecture~\ref{conj} holds.
In order to state this reformulation, we need to recall some notions concerning \monem{dyck paths}.
Recall that such a path is a sequence 
   $$\beta=p_0,p_1,\ldots ,p_{2\,n}$$
 of points $p_i=(x_i,y_i)$ in $\NN\times \NN$, with $x_i\leq y_i$,
$p_0=(0,0)$, $p_{2\,n}=(n,n)$, and such that either
     $$p_{i+1} = \begin{cases}
      p_i+(1,0)& \text{or}, \\[4pt]
      p_i+(0,1),\end{cases}$$
for all $i$.
We say that $n$ is the \monem{height} of $\beta$, and 
that we have a \monem{horizontal step} 
     $$s_i:=(p_{i-1},p_i),$$
  at \monem{level} $k\geq 1$, if
  $y_i=k=y_{i+1}$.
The set of Dyck paths is denoted by $\D_n$.
To any given Dyck path $\beta$, we associate the composition $\nu(\beta)$ obtained by counting  the number of level $k$ horizontal steps (ignoring the situation when this number is zero), for $k<n$.
An example is given in Figure~\ref{fig_chemin_dyck}, for the Dyck path
\begin{eqnarray*}
   \beta&=&\bleu{(0,0),(0,1),(0,2),\rouge{(1,2)}, (1,3),(1,4),\rouge{(2,4),(3,4),(4,4)},}\\
      &&\qquad\qquad  \bleu{(4,5),(4,6), \rouge{(5,6),(6,6)},(6,7),(6,8),(7,8),(8,8)}
   \end{eqnarray*}
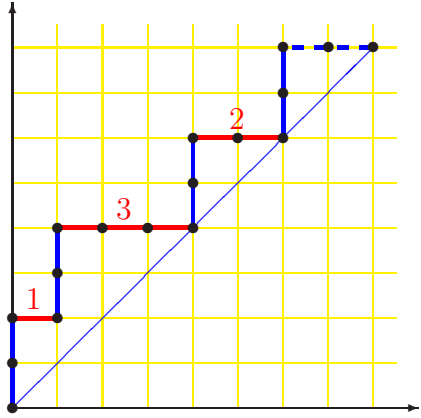
\begin{figure}[ht]\begin{center}\setlength{\unitlength}{6mm}
\begin{picture}(9,9)(0,0)
 \multiput(0,1)(0,1){8}{\jaune{\line(1,0){8.5}}}
  \multiput(1,0)(1,0){8}{\jaune{\line(0,1){8.5}}}
  \put(0,0){\vector(1,0){9}}
 \put(0,0){\vector(0,1){9}}
 \put(0,0){\bleu{\line(1,1){8}}}
  \linethickness{.5mm}
 \put(0,0){\bleu{\line(0,1){2}}}
  \put(1,2){\bleu{\line(0,1){2}}}
 \put(4,4){\bleu{\line(0,1){2}}}
 \put(6,6){\bleu{\line(0,1){2}}}
\put(0,2){\rouge{\line(1,0){1}}} \put(0.3,2.2){$\rouge{1}$}
\put(1,4){\rouge{\line(1,0){3}}} \put(2.3,4.2){$\rouge{3}$}
\put(4,6){\rouge{\line(1,0){2}}}  \put(4.8,6.2){$\rouge{2}$}
\multiput(6.2,8)(0.5,0){4}{\bleu{\line(1,0){.25}}}
 \put(0,0){\circle*{.25}} \put(0,1){\circle*{.25}} \put(0,2){\circle*{.25}} \put(1,2){\circle*{.25}} \put(1,3){\circle*{.25}}\put(1,4){\circle*{.25}}
 \put(2,4){\circle*{.25}} \put(3,4){\circle*{.25}} \put(4,4){\circle*{.25}} \put(4,5){\circle*{.25}} \put(4,6){\circle*{.25}} \put(5,6){\circle*{.25}}
 \put(6,6){\circle*{.25}} \put(6,7){\circle*{.25}} \put(6,8){\circle*{.25}} \put(7,8){\circle*{.25}} \put(8,8){\circle*{.25}}
 \end{picture}\end{center}
\caption{A Dyck path $\beta$ with $\nu(\beta)=\rouge{132}$.}
\label{fig_chemin_dyck}
\end{figure}
With these notions at hand, we may now give our new formula.

\begin{proposition}\label{theo_col}\label{prop_col}
The Hilbert series of the quotient $\colored_n$ is given by the formula:
\begin{equation}\label{eq_hilb_col}
   \bleu{\colored_n(\mq)=\sum_{\beta\in \D_n} h_{\nu(\beta)}(\mq)},
\end{equation}
whose dependence on $\ell$ is entirely encapsulated in the number of variables  in $\mq$.
\end{proposition}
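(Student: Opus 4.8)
The plan is to derive the formula directly from the monomial model of $\colored_n$ established in \cite{aval_color} and then to reorganize the resulting Hilbert series. First I would recall, from that paper, the description of the standard monomials representing $\colored_n^{(\ell)}$: a monomial $X^A$ survives in the quotient precisely when its sequence of \emph{column degrees} $e_j:=\sum_{i=1}^\ell a_{ij}$ obeys the staircase (ballot) inequalities $e_1+\cdots+e_i\le i-1$ for every $1\le i\le n$, while the way each column degree $e_j$ is distributed among the $\ell$ rows is left unconstrained. The key observation—this is what turns the known Catalan-type count into an $h$-positive symmetric function, and appears not to have been isolated in \cite{aval_color}—is that the multigraded Hilbert series therefore factors column by column: for a fixed column-degree vector $(e_1,\ldots,e_n)$, summing $\mq^{A}$ over all admissible fillings gives $\prod_{j=1}^{n}\big(\sum_{|A_j|=e_j}\mq^{A_j}\big)$, and each inner sum is exactly the complete homogeneous symmetric function $h_{e_j}(\mq)$. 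Hence I obtain the intermediate identity $\colored_n(\mq)=\sum_{(e)}\prod_{j=1}^{n}h_{e_j}(\mq)$, summed over all staircase sequences $(e_1,\ldots,e_n)$.

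The second step reindexes this sum onto $\D_n$. The $i=1$ inequality forces $e_1=0$, so the factor $h_{e_1}=h_0=1$ is inert and the datum reduces to $(e_2,\ldots,e_n)$; setting $c_s:=e_{s+1}$ turns the constraints into $\sum_{s\le s'}c_s\le s'$ for $1\le s'\le n-1$. On the other side, a path $\beta\in\D_n$ is encoded by its level profile $(m_1,\ldots,m_n)$, where $m_k$ is the number of level-$k$ horizontal steps; these satisfy $\sum_{k\le j}m_k\le j$ and $\sum_k m_k=n$, and $\nu(\beta)$ records the parts $m_k$ for $k<n$ only (so $h_{\nu(\beta)}=\prod_{k=1}^{n-1}h_{m_k}$). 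Since the top value $m_n$ is determined by the others and carries no weight, the weighted sum over $\D_n$ is the sum of $\prod_{k=1}^{n-1}h_{m_k}(\mq)$ over all $(m_1,\ldots,m_{n-1})$ with $\sum_{k\le j}m_k\le j$ for $j\le n-1$. This is verbatim the sum produced in the first step under $c_s\leftrightarrow m_s$, so the two expressions coincide and $\colored_n(\mq)=\sum_{\beta\in\D_n}h_{\nu(\beta)}(\mq)$. The final clause is then immediate: $\D_n$ and the compositions $\nu(\beta)$ do not refer to $\ell$ at all, so $\ell$ enters only through the number of variables at which the $h_{\nu(\beta)}$ are evaluated.

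The step I expect to be the genuine obstacle is the first one, namely securing the column-by-column free-filling structure of the standard monomials. The defining relations of the colored quasi-symmetric functions mix the rows in a nontrivial way—already in bidegree $(1,1)$ the ideal $K$ absorbs all row sums and column sums together with the two monomial generators $\overline{M}_C$—so it is not a priori obvious that, modulo $K$, a column of total degree $e_j$ may be filled by colors completely freely. Establishing this (either by citing the precise monomial basis of \cite{aval_color} or by reproving it through an analysis of the leading terms of the $\overline{M}_C$) is where the real content sits; the passage to $h_{e_j}$ and the reindexing onto Dyck paths are then formal. As consistency checks I would confirm the one-variable specialization $\mq=(q,0,0,\ldots)$, where $h_k\mapsto q^k$ and the identity collapses to $\sum_{\beta}q^{\chi(\beta)}$—recovering the Aval--Bergeron--Bergeron Catalan count of the ordinary ($\ell=1$) quasi-symmetric quotient—and verify the cases $n\le 3$ against a direct computation of $\colored_n$.
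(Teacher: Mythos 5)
Your proposal is correct and takes essentially the same approach as the paper: both arguments rest on the monomial basis of $\colored_n^{(\ell)}$ from \cite{aval_color} (Lemma~\ref{prop_aval_color}) together with the key observation that, once the column degrees $e_1,\ldots,e_n$ are fixed subject to a ballot/Dyck-path condition, the distribution of each $e_j$ among the $\ell$ rows is completely free, so each column contributes a factor $h_{e_j}(\mq)$ and the sum regroups by underlying Dyck path. The only divergence is bookkeeping: the paper encodes basis monomials as $\ell$-Dyck paths and groups them via the bijection $\Phi$ according to the underlying colored Dyck path, whereas you restate the basis condition as the inequalities $e_1+\cdots+e_i\le i-1$ and reindex algebraically; this restatement is equivalent to the $\ell$-Dyck-path condition of Lemma~\ref{prop_aval_color} by a two-line check (the condition $\ell\,(\hbox{partial sum})\le m-1$ along the entries-reading-word binds only at the last positive entry of each column, where it reads exactly $e_1+\cdots+e_j\le j-1$, and conversely), so the step you flag as the main obstacle is just this short equivalence and not a genuine gap.
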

Observe that if we set all the $q_i$ equal to $1$, we get a combinatorial expression  which is interesting on its own:
\begin{equation}\label{eq_comb_col}
\sum_{\beta\in\D_n}\,\prod_{k\in \nu(\beta)}\binom{k+\ell-1}{ k} = {1\over \ell\, n+1}\binom{(\ell+1)\,n}{ n},
\end{equation}   
in which one may consider $\ell$ as a variable, hence we actually get a polynomial identity\footnote{It is a well-known fact that the right-hand-side is actually a polynomial in $\ell$.}.  For integral values of $\ell$, equation \pref{eq_comb_col} may be proven via a simple bijection on paths.
Observe also that both spaces $ \colored_n$ and $\DQcoinv[n]$ coincide when $\ell=1$.

%%%%%%%%%%%%%%%%%%%%%%%%%%%%%%%%%%%%%%%%%%%%%%%%%
\section{Proofs}

\subsection{Theorem \ref{mainthm}} Recall that we are asserting here that there exists a universal expression for the Hilbert series of $\DQcoinv$ of the form
    \begin{equation}\label{schur_generic_bis}
              \bleu{\DQcoinv(\mq) = \sum_{\mu} c_\mu\, s_\mu(\mq),\qquad c_\mu\in\NN,}
       \end{equation}
with the sum running over partitions $\mu$ of integers $d\leq 2\,r\,n-r-n$, each such partitions having at most $n$ parts. 
This restriction on the number of parts follows from the fact that this holds for the whole space $\mathcal{R}_n$, of which $\DQcoinv$ (or rather $\DQW$) can be considered as a subspace.

In order to prove inequality (\ref{interval_entiers}), we need to introduce some notations. The sum of all the entries a $r$-matrix $A$, divided by $r$, is an integer that we denote by $w_r(A)$. This is said to be the \monem{$r$-size} of $A$. To avoid ambiguity, we avoid $r$-matrices having their last column vanishing. The number of column of such a $r$-matrix is its \monem{length}.

Given  a $r$-vector $V$ (a single-column $r$-matrix), there is a lexicographically largest $r$-matrix  $\bleu{A(V)}$ 
such that
\begin{itemize}\itemsep=4pt
 \item  all of the columns of $A(V)$ are of $r$-size $1$,
 \item the sum of the columns of $A(V)$ is $V$,
\item the columns of $A(V)$ occur in decreasing  lexicographic\footnote{Considering entries from top to bottom.} order from left to right.
\end{itemize}
We denote $\bleu{\theta(V)}$ the \monem{first column} of $A(V)$, and set $\bleu{\Delta(V)}:=V-\theta(V)$.
For $V^\tr=(2,3,4)$, we get 
    $$A(V)=\begin{pmatrix} 2 & 0 & 0\\
                                          1 & 2 & 0\\
                                          0 & 1 & 3
                  \end{pmatrix},$$
hence $\theta(V)^\tr=(2,10)$ and $\Delta(V)^\tr=(0,2,4)$.
We now associate to any $r$-vector $V$ the smallest set, denoted by $S(V)$,  of $r$-matrices that contains $A(V)$ and that is closed under the operation that consists in taking sum of consecutive columns. For example, still with $V^\tr=(2,3,4)$, we have
   $$S(V):=\left\{\begin{pmatrix} 2 & 0 & 0\\
                                          1 & 2 & 0\\
                                          0 & 1 & 3
                  \end{pmatrix}, \begin{pmatrix}
                                          2 & 0\\
                                          3 & 0\\
                                          1 & 3
                  \end{pmatrix}, \begin{pmatrix} 2 & 0\\
                                          1 & 2\\
                                          0 & 4
                  \end{pmatrix} , \begin{pmatrix} 2\\
                                          3\\
                                         4
                  \end{pmatrix}  \right\}$$
For two $\ell$-row matrices, $A=A_1\cdots A_k$ and $B=B_1\cdots B_j$, the \monem{concatenation} $\bleu{AB}$ is the matrix having columns
   $$\bleu{AB:=A_1\cdots A_kB_1\cdots B_j}.$$
For a general $r$-composition matrix  $A=A_1\,A_2\dots A_k$, we define $S(A)$ to be the set obtained by all possible concatenation of matrices successively picked from each of the sets $S(A_i)$. 

We now come to the definition of the polynomials $G[A]:=G[A](X)$ that are used to prove (\ref{interval_entiers}).
These are indexed by \monem{trans $r$-matrices}, which is to say $r$-matrices $A=A_1\,A_2\dots A_k$ for which there exists $1\le j\le k$ such that $w_r(A_1\dots A_j)\ge j$.
It is clear that any $r$-composition matrix is trans. Let $W=G(r,n)$.
\begin{definition}\label{def:G}
   To a trans $r$-matrix $A$, we associate the $W$-quasi-invariant polynomial, $\bleu{G[A]=G_A(X)}$ recursively defined as follows.
\begin{itemize}
\item If $A$ is a $r$-composition, we set 
     $$\bleu{G_A:=\sum_{B\in S(A)} M_B}.$$
\item If not, there is a unique column decomposition of $A$ as a concatenation 
     $$A=B\,\bz\, VC,$$
where $B$ a $r$-matrix (say of length $j$), $V$ is a non-zero $r$-vector, and $C$ is a $r$-composition.
We then set 
    $$\bleu{G[A]:= G[{B\,V C}] - X_{j+1}^{\theta(V)} \,G[B\Delta(V)\,C]}.$$
\end{itemize}
It should be clear that when $A=B\,\bz\, VC$ is trans, then so are both $B\,V\,C$ and $B\Delta(V)C$. Thus, the family $G[A]$ is well-defined by induction on the length of $A$.
\end{definition}
It is helpful to consider an explicit an example. With $r=2$ and $n=4$, we compute that
\begin{eqnarray*}
         \Gpol{ 0&2&0&1\\   0&2&0&3}
 &=&  \Gpol{0&2&1\\  0&2&3}
             -x_{3}\,y_{3}\,   \Gpol{0&2&0\\   0&2&0}\\
&=&  \Gpol{2&1\\   2&3}
             -x_{1}^2  \Gpol{0&1\\ 2&3}
             -x_{3}\,y_{3} \left(\Gpol{2&0\\ 2&2}
             -x_{1}^2\, \Gpol{0&0\\ 2&2}\right).
\end{eqnarray*}
Since
\begin{eqnarray*}
 \Gpol{2&1\\   2&3}&=&\Mpol{2&1\\ 2&3}+\Mpol{2&0&1\\ 2&0&3}+\Mpol{2&1&0\\ 2&1&2}+\Mpol{2&0&1&0\\ 0&2&1&2},\\ 
\Gpol{0&1\\ 2&3}&=&\Mpol{0&1\\ 2&3}+\Mpol{0&1&0\\ 2&1&2},\\ 
\Gpol{2&0\\ 2&2}&=&\Mpol{2&0\\ 2&2}+\Mpol{2&0&0\\ 0&2&2},\\ 
\Gpol{0&0\\ 2&2}&=&\Mpol{0&0\\ 2&2},
\end{eqnarray*}
we finally get
$$\Gpol{ 0&2&0&1\\   0&2&0&3}=
x_{2}^2\, y_{2}^2\,x_{4}\,y_{4}^3+x_{2}^2\,x_{3}\,y_{3}^3\,y_{4}^2+x_{2}^2\,y_{3}^2\,x_{ 4}\,y_{4}^3+x_{3}^2\,y_{3}^2\,x_{4}\,y_{4}^3-x_{3}^3\,y_{3}^3\,y_{4}^2.$$
Observe that the lexicographic order leading monomial of $\Gpol{0&2&0&1\\ 0&2&0&3}$  is precisely 
   $$X^{\big(\begin{smallmatrix} 0201\\ 0203\end{smallmatrix}\big)}=x_{2}^2\,y_{2}^2\,x_{4}\,y_{4}^3.$$
This is shown to hold in full generality in the following proposition.
\begin{proposition}\label{prop:Glex}
For any trans $r$-matrix $A$, the leading monomial of $G_A(X)$ is $X^A$.
\end{proposition}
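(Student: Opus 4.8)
The plan is to induct on the length (the number of columns) of $A$, following the two cases of Definition~\ref{def:G}. Throughout, the leading monomial $\mathrm{LM}$ refers to the lexicographic order in which a monomial $X^E$ is encoded by the word obtained by reading the entries of its exponent matrix $E$ column by column (left to right) and, within each column, top to bottom; this is the order under which the worked example indeed has $X^A$ on top. I would prove the sharper statement $G_A = X^A + \sum_{X^E < X^A} g_E\, X^E$, i.e. that $X^A$ occurs with coefficient $1$ and strictly dominates every other monomial.

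For the base case $A$ is an $r$-composition and $G_A = \sum_{B \in S(A)} M_B$. First I would check that $\mathrm{LM}(M_B) = X^B$, the monomial placing the columns of $B$ left-justified at positions $1,2,\ldots$: since $B$ has no vanishing column, pushing each successive column to the smallest available position lexicographically maximises the reading word. Next I would show $X^B \le X^A$ for every $B \in S(A)$, with equality only when $B=A$. Indeed each $B\in S(A)$ arises from $A$ by refining the columns $A_i$ into the $r$-size-$1$ pieces of $A(A_i)$ and then merging some of them; at the first column that is actually refined, the first piece is a proper partial column sum, hence $\le A_i$ entrywise and strictly smaller in some coordinate, so the reading word of $X^B$ drops below that of $X^A$ at that block. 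Consequently $X^A$ is the unique maximal monomial among all the $M_B$, it occurs only in $M_A$ with coefficient $1$, and $\mathrm{LM}(G_A)=X^A$.

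For the inductive step, write $A = B\,\bz\,VC$ with $\bz$ the (necessarily last) vanishing column, at position $j+1$. Both $BVC$ and $B\Delta(V)C$ are trans and shorter, so by induction $\mathrm{LM}(G[BVC]) = X^{BVC}$ and $\mathrm{LM}(G[B\Delta(V)C]) = X^{B\Delta(V)C}$, each with coefficient $1$. The key elementary identity is $X_{j+1}^{\theta(V)}\,X^{B\Delta(V)C} = X^{BVC}$, since $\theta(V)+\Delta(V)=V$ is then placed in column $j+1$; hence in $G[A] = G[BVC] - X_{j+1}^{\theta(V)} G[B\Delta(V)C]$ the two leading monomials are both $X^{BVC}$ and cancel. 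The coefficient of the target $X^A = X^{B\bz VC}$ is easy to control: its column $j+1$ is $\bz$, whereas every monomial of $X_{j+1}^{\theta(V)} G[B\Delta(V)C]$ has column $j+1$ at least $\theta(V)\neq\bz$ entrywise, so $X^A$ cannot come from the subtracted term and its coefficient in $G[A]$ equals its coefficient in $G[BVC]$, which I would show is $1$ — immediately when $BVC$ is a composition (the gap at column $j+1$ forces the unique index $D=BVC$ in $S(BVC)$), and in general as a by-product of the matching below.

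The main obstacle is to show that nothing survives strictly between $X^A$ and the cancelled $X^{BVC}$. Since $X^A < X^{BVC}$, a priori many monomials of $G[BVC]$ and of $X_{j+1}^{\theta(V)} G[B\Delta(V)C]$ live in this window, and the plain inductive hypothesis, which records only leading terms, does not suffice to conclude. The approach I would take is to establish a coefficient-preserving matching: the map ``subtract $\theta(V)$ from column $j+1$'' should biject the monomials of $G[BVC]$ lying above $X^A$ and having column $j+1 \ge \theta(V)$ with the monomials of $G[B\Delta(V)C]$, so that $G[BVC]$ and $X_{j+1}^{\theta(V)} G[B\Delta(V)C]$ agree above $X^A$ except for the single extra term $X^A$ in $G[BVC]$ (whose column $j+1$ is too small, namely $\bz$, to lie in the image of the shift). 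Proving this matching is the heart of the argument and is what I expect to require real work: it seems to demand a strengthened inductive hypothesis tracking the full distribution of the ``active'' column $j+1$ across the recursion, together with a check that the $\theta(V)$-shift never carries a window monomial above $X^{BVC}$ nor below $X^A$. Once the matching is in place the window is empty in $G[A]$, and combined with the coefficient computation this yields $\mathrm{LM}(G[A]) = X^A$, completing the induction.
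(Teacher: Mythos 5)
Your setup is sound as far as it goes: the base case for $r$-compositions (leading monomial of $M_B$ is the left-justified $X^B$, and every proper refinement $B\in S(A)$ drops lexicographically below $A$ at the first refined column) is correct, and you have correctly located the difficulty in the inductive step — after the two copies of $X^{BVC}$ cancel in $G[A]=G[BVC]-X_{j+1}^{\theta(V)}G[B\Delta(V)C]$, one must show that nothing survives in the lex window between $X^A$ and $X^{BVC}$, and an induction that records only leading monomials cannot see this. But your proposal stops exactly there: the ``coefficient-preserving matching'' is announced as the heart of the argument and is explicitly left unproven (``what I expect to require real work \dots seems to demand a strengthened inductive hypothesis''). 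Diagnosing that a stronger statement is needed is not the same as formulating and proving one, so this is a genuine gap, not a detail. Worse, the matching as you state it is false: the shift $m\mapsto X_{j+1}^{\theta(V)}m$ cannot biject the above-$X^A$ monomials of $G[BVC]$ with \emph{all} monomials of $G[B\Delta(V)C]$, since $G[B\Delta(V)C]$ in general contains monomials whose exponents in columns $1,\dots,j$ are lexicographically smaller than $B$, and their shifts land strictly \emph{below} $X^A$. Any correct version must be restricted to the ``prefix equal to $B$'' parts of both polynomials, and isolating those parts is precisely the strengthened statement you did not supply.

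For comparison, the paper closes the induction with exactly such a strengthening: for any $r$-matrix $A$ of length $j$ and any $r$-composition $D$, one has $G_{AD}(X)=X^{A}\,G_{\bz^j D}(X)+({\rm terms}<_{\rm lex} X^A)$ (equation \pref{eq:Glex}), proved together with the shift Lemma~\ref{lem:recG}, $G_{\bz D}=G_D(X_{-1})$. With this hypothesis one factors $X^B$ out of both $G_{BVCD}$ and $G_{B\Delta(V)CD}$, and then the decisive point is that the remaining combination is not handled by matching monomials at all: the bracket
\begin{equation*}
G_{\bz^j VCD}-X_{j+1}^{\theta(V)}\,G_{\bz^j\Delta(V)CD}=G_{\bz^{j+1}VCD}
\end{equation*}
holds \emph{exactly}, because it is Definition~\ref{def:G} itself applied with the all-zero prefix $\bz^j$ in the role of $B$; Lemma~\ref{lem:recG} then identifies $G_{\bz^{j+1}VCD}$ as $G_{VCD}$ in the variables shifted $j+1$ columns to the right, whose leading monomial is known from the composition case. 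Thus the window cancellation you hoped to obtain from a term-by-term bijection is, in the paper, an algebraic identity built into the recursive definition. This identity (or some equivalent of it) is the missing idea in your plan; without it the induction does not close.
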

\noindent
Proposition \ref{prop:Glex} is established through the next two lemmas.
\begin{lemma}\label{lem:recG}
          For any $r$-composition matrix $A$, we have  
   \begin{equation}\label{eq:lemrecG}
         G_{\bz A}=G_A(X_{-1}),
     \end{equation}
writing $X_{-1}$ for the alphabet obtained from $X$ by removing its first column of variables
\end{lemma}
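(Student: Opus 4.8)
The plan is to peel off the prepended zero column using the recursion of Definition~\ref{def:G}, and then match the result against the monomial expansion $G_A=\sum_{B\in S(A)}M_B$ term by term. First I would unravel the recursion. Since $A=A_1\cdots A_k$ is an $r$-composition it has no vanishing column, so the unique zero column of $\bz A$ is the prepended one; the decomposition $\bz A=B\,\bz\,VC$ therefore has $B$ empty (length $j=0$), $V=A_1$, and $C=A'$, where I write $A':=A_2\cdots A_k$. Definition~\ref{def:G} then gives
\[
   G_{\bz A}=G_{A}-X_1^{\theta(A_1)}\,G_{\Delta(A_1)\,A'}.
\]
Consequently the lemma is equivalent to the identity
\[
   G_{A}-G_{A}(X_{-1})=X_1^{\theta(A_1)}\,G_{\Delta(A_1)\,A'},
\]
which I abbreviate $(\ast)$ and which I would prove by induction on the length $k$ (throughout assuming $\bz A$ is trans, so that $G_{\bz A}$ is defined; one checks that in the one case where the induction is used this hypothesis is inherited by $\bz A'$).

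For the monomial side, I would expand $G_A=\sum_{B\in S(A)}M_B$ and use the elementary splitting $M_B(X)=M_B(X_{-1})+X_1^{B_1}M_{B'}(X_{-1})$, where $B'$ is $B$ with its first column deleted; this separates the selections that use the first column of variables from those that do not. Summing over $B\in S(A)$ gives $G_A-G_A(X_{-1})=\sum_{B\in S(A)}X_1^{B_1}M_{B'}(X_{-1})$, so $(\ast)$ becomes a purely combinatorial statement about $S(A)$. Recall that $S(A)=S(A_1)\cdots S(A_k)$ and that the elements of $S(A_1)$ arise from $A(A_1)=\theta_1\theta_2\cdots\theta_m$ (with $\theta_1=\theta(A_1)$ and $m=w_r(A_1)$) by grouping its columns into consecutive blocks, the first column of such an element being the sum of the first block. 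I would split the sum according to the size $t$ of this first block. The blocks of size $t=1$ contribute $X_1^{\theta(A_1)}G_{\Delta(A_1)A'}(X_{-1})$, once one knows the greedy-consistency relation $A(V)=\theta(V)\,A(\Delta(V))$, so that $A(\Delta(A_1))=\theta_2\cdots\theta_m$ and the groupings of $\theta_2\cdots\theta_m$ are exactly $S(\Delta(A_1))$. The blocks of size $t\ge2$ are then matched, via the bijection that replaces the first block $\{\theta_1,\dots,\theta_t\}$ by $\{\theta_2,\dots,\theta_t\}$ while keeping all later blocks and the $S(A')$-factor unchanged, with the column-$1$-using part $G_{\Delta(A_1)A'}(X)-G_{\Delta(A_1)A'}(X_{-1})$; under this matching the exponent of $X_1$ is $\theta_1+\cdots+\theta_t$ on both sides and the surviving $M$-factor is identical. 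Adding the two contributions produces exactly $X_1^{\theta(A_1)}G_{\Delta(A_1)A'}(X)$, which is $(\ast)$.

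The greedy-consistency relation is the one auxiliary fact to dispatch: since any nonnegative integer vector whose entry-sum is a positive multiple of $r$ splits into columns of $r$-size $1$, the column $\theta(V)$ is simply the lexicographically largest $r$-size-$1$ vector dominated by $V$ componentwise, and shrinking the componentwise domain from $V$ to $\Delta(V)\le V$ can only lower this lex-maximum; this yields both $\theta(\Delta(V))\le_{\mathrm{lex}}\theta(V)$ and the recursion $A(V)=\theta(V)A(\Delta(V))$. The only place induction is genuinely needed is the degenerate case $w_r(A_1)=1$: then $m=1$, there are no blocks of size $t\ge2$, and $\Delta(A_1)=\bz$, so the recursion reads $G_{\bz A}=G_A-X_1^{A_1}G_{\bz A'}$; here I invoke the inductive hypothesis $G_{\bz A'}=G_{A'}(X_{-1})$ (valid since $A'$ has length $k-1$ and $\bz A'$ remains trans) to identify the subtracted term with $X_1^{A_1}G_{A'}(X_{-1})$, which the $t=1$ computation shows equals $G_A-G_A(X_{-1})$. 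The base case $k\le1$ is checked directly with the convention $G_{\bz}=1$. The main obstacle is the block-size bijection of the previous paragraph: one must verify carefully that the $t\ge2$ terms of $\sum_{B\in S(A)}X_1^{B_1}M_{B'}(X_{-1})$ reproduce precisely the first-column contributions of $G_{\Delta(A_1)A'}$, with matching $X_1$-exponents and matching truncated monomials.
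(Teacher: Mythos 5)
Your proof is correct and follows essentially the same route as the paper: the paper's entire proof consists of writing $A=VC$ and asserting, ``in view of Definition~\ref{def:G}'', precisely your identity $(\ast)$, namely $G_{VC}=X_1^{\theta(V)}\,G_{\Delta(V)C}+G_{VC}(X_{-1})$, which immediately gives the lemma. Your monomial splitting of $M_B$, the greedy relation $A(V)=\theta(V)A(\Delta(V))$, the block-size bijection, and the induction for the degenerate case $w_r(A_1)=1$ simply supply the verification of that identity which the paper leaves to the reader (and you are in fact more careful than the paper in flagging the hypothesis that $\bz A$ be trans, without which $G_{\bz A}$ is not even defined).
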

\begin{proof}[\bf Proof.]
We write $A=VC$, with $V$ the $r$-vector corresponding to the first column of $A$. In view of Definition \ref{def:G}, we have 
\begin{equation}
G_{VC} = X^{\theta(V)} G_{\Delta(V) C} + G_{VC}(X_{-1})
\end{equation}
which implies \pref{eq:lemrecG}.
\end{proof}
\begin{lemma} 
   Let $A$ be a $r$-matrix of length $j$, and $D$ a $r$-composition matrix, then we have
       \begin{equation}\label{eq:Glex}
                G_{AD}(X)=X^{A}G_{\bz^j D}(X)+({\rm terms}<_{\rm lex} X^A).
      \end{equation}
\end{lemma}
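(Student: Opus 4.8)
The plan is to argue by induction on the length $j$ of $A$, peeling off its first column $V:=A_1$ (so $A=VA'$ with $A'$ an $r$-matrix of length $j-1$), and to reduce everything to the first-column behaviour of $G$. Throughout I use the lexicographic order that reads the exponent matrix column by column (each column top to bottom), so that a monomial is determined first by its first column; in particular any monomial whose first column is $<_{\rm lex}V$ is itself $<_{\rm lex}X^A$, whatever happens in the remaining columns. I also need the extension of Lemma \ref{lem:recG} to \emph{arbitrary} $r$-matrices $B$, namely $G_{\bz B}=G_B(X_{-1})$. This is proved by a separate induction on the length of $B$: if $B$ is a composition it is Lemma \ref{lem:recG}, and otherwise one writes $B=B'\bz W C$ at its last zero column, applies Definition \ref{def:G} to both $\bz B$ and $B$, and observes that the monomials $X^{\theta(W)}$ occurring in the two expansions sit in columns differing by exactly the one-column shift induced by $X_{-1}$; the inductive hypothesis applied to $B'WC$ and $B'\Delta(W)C$ then yields $G_{\bz B}=G_B(X_{-1})$.

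The heart of the matter is the case $V\neq\bz$, for which I isolate the following first-column statement: for every nonzero $r$-vector $V$ and every $r$-matrix $B$,
\begin{equation}
   G_{VB}=X_1^{V}\,G_{\bz B}+(\text{terms whose first column is } <_{\rm lex} V).\tag{$\heartsuit$}
\end{equation}
I would prove $(\heartsuit)$ by induction on the $r$-size $w_r(V)$. The identity used inside the proof of Lemma \ref{lem:recG}, namely $G_{VB}=X_1^{\theta(V)}G_{\Delta(V)B}+G_{\bz VB}$, holds exactly when $B$ is a composition; for a general $B$ the same manipulation holds up to a correction term coming from the clash of column indices at the last zero column of $B$. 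The key point is that this correction term, like $G_{\bz VB}=G_{VB}(X_{-1})$ itself, has vanishing first column (the offending operators act only on columns $\geq 2$), so it is invisible to $(\heartsuit)$. Taking the part with first column $\geq_{\rm lex}V$ and feeding in the inductive hypothesis for $\Delta(V)$ (of $r$-size $w_r(V)-1$), the factor $X_1^{\theta(V)}$ converts $X_1^{\Delta(V)}G_{\bz B}$ into $X_1^{\theta(V)+\Delta(V)}G_{\bz B}=X_1^{V}G_{\bz B}$, which is $(\heartsuit)$; the base case $w_r(V)=1$ is immediate since then $\theta(V)=V$ and $\Delta(V)=\bz$.

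Granting these two ingredients, the main induction is short. For $j=0$ there is nothing to prove. If $V=\bz$, then $G_{AD}=G_{\bz(A'D)}=G_{A'D}(X_{-1})$ by the extended Lemma \ref{lem:recG}; the inductive hypothesis gives $G_{A'D}=X^{A'}G_{\bz^{j-1}D}+(\text{lower})$, and applying the order-preserving substitution $X_{-1}$ turns this into $X^{A}G_{\bz^{j}D}+(\text{lower})$, using $X^{A'}(X_{-1})=X^{\bz A'}=X^A$ and $G_{\bz^{j-1}D}(X_{-1})=G_{\bz^{j}D}$. If $V\neq\bz$, then $(\heartsuit)$ with $B=A'D$ gives $G_{AD}=X_1^{V}G_{\bz A'D}+(\text{first column} <_{\rm lex} V)$, where the second summand is already $<_{\rm lex}X^A$; since $G_{\bz A'D}=G_{A'D}(X_{-1})$, the inductive hypothesis together with $X_1^{V}X^{\bz A'}=X^{A}$ again produces $X^{A}G_{\bz^{j}D}+(\text{lower})$.

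The step I expect to be the real obstacle is $(\heartsuit)$, and specifically the control of the correction terms when $B$ has interior zero columns (which must be handled by a nested induction on the length of $B$). The recursion in Definition \ref{def:G} is engineered precisely to cancel the ``naive'' leading monomials of its two summands, so one cannot read off $X^A$ from the leading term of either piece; the whole argument hinges on grading by the first column (equivalently, stripping $V$ one unit of $r$-size at a time) and verifying that every discrepancy introduced by the shifting column indices lands in the first-column-zero part, where it cannot interfere with the coefficient of $X^A$.
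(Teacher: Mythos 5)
Your proof is correct, but it takes a genuinely different route from the paper's. The paper proves \pref{eq:Glex} by a single induction on the length of $A$, factored at the \emph{last zero column}, $A=B\bz VC$ --- the very decomposition on which Definition~\ref{def:G} is built. Each inductive step is then short: apply the defining recursion to $AD$ (which has the same last zero column, $CD$ being a composition), use the inductive hypothesis to pull $X^{B}$ out of both resulting terms, run the recursion backwards to recombine them into $X^{B}$ times a $G$ whose zero column has migrated to the front, apply Lemma~\ref{lem:recG}, and finish with one more use of the inductive hypothesis; no statement about first columns is ever formulated. You instead peel off the \emph{first} column, which is transverse to the recursion defining $G$, and the price is exactly $(\heartsuit)$ and the correction-term claim behind it. Your plan for that claim is sound, and it is where the paper's mechanics reappear in disguise: decomposing $B=B'\bz WC$ at its last zero column, both $G_{VB}$ and $G_{\Delta(V)B}$ expand via Definition~\ref{def:G} at the same (shifted) zero column, the accompanying monomial $X^{\theta(W)}$ sits in a column of index at least $2$ and so never touches column $1$, and the base case is precisely the identity inside the proof of Lemma~\ref{lem:recG}; the same mechanism proves your extended shift lemma $G_{\bz B}=G_{B}(X_{-1})$. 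Your route costs more (three nested inductions against the paper's one), but it buys two things: it makes explicit an extension the paper uses silently --- the paper's own chain of equalities needs $G_{\bz^{j+1}VCD}=G_{\bz^{j}VCD}(X_{-1})$, although Lemma~\ref{lem:recG} is stated only for compositions --- and it subsumes the composition base case of \pref{eq:Glex}, which the paper merely declares a direct consequence of Definition~\ref{def:G}. One caveat you share with the paper rather than introduce: matrices such as $\bz B$ or $\bz^{j}D$ need not be \emph{trans} (for instance $\bz V$ with $w_r(V)=1$ is not), so both arguments implicitly extend Definition~\ref{def:G} to all $r$-matrices; this is harmless, since the recursion drops one zero column per step and terminates at compositions without ever invoking transness, but it deserves a sentence in a careful write-up.
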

\begin{proof}[\bf Proof.]
If $A$ is a $r$-composition, \pref{eq:Glex} is a direct consequence of Definition \ref{def:G}. 
If not, equation \pref{eq:Glex} is shown to hold by induction on the length of $A$. 
Consider the unique factorization
            $$A=B\bz V C$$ 
 with $B$ is a $r$-matrix of length $j$, $V$ a non-vanishing $r$-vector, and $C$ is a $r$-composition.
We use Definition \ref{def:G} and Lemma \ref{lem:recG} to calculate that
\begin{eqnarray*}
   G_{B\bz V CD}&=&G_{BV CD} - X^{\bz^{j}\theta(V)\bz^{n-j-1}} G_{B\Delta(V)CD}\\ 
         &=&X^B\,G_{\bz^j V CD}-X^{\bz^{j-1}\theta(V)\bz^{n-j}}  X^B\,G_{\bz^j \Delta(V)CD}+({\rm terms}<_{\rm lex}X^{B})\\ 
        &=&X^{B}\,G_{\bz^{j+1} V CD}+({\rm terms}<_{\rm lex}X^{B})\\ 
       &=&X^{B}\,G_{\bz^{j} V CD}(X_{-1})+({\rm terms}<_{\rm lex}X^{B})\\ 
       &=&X^{B\bz V C}\,G_{\bz^{p-1} D}(X_{-1})+({\rm terms}<_{\rm lex}X^{B\bz V C})\\ 
       &=&X^{B\bz V C}\,G_{\bz^p D}+({\rm terms}<_{\rm lex}X^{B\bz V C}).
\end{eqnarray*}
\end{proof}
\begin{proof}[\bf Proof of Condition \pref{interval_entiers}.]  
The proof is now an easy consequence of the following two observations.
\begin{itemize}
\item Any $r$-matrix $A$ with $w_r(A) = n$ is trans. Thus any monomial $X^A$ with $A$ a $r$-matrix and $w_r(A)= n$ is the leading monomial of in ideal generated  by quasi-invariant polynomials for the group $G(n,r)$.
\item Any monomial of total degree strictly greater than $2rn-r-n$ is the multiple of a monomial $X^A$, with $A$ a $r$-matrix and for which $w_r(A)= n$. Since it is true for any monomial
of such degree, any monomial of degree strictly greater than $2rn-r-n$ lies in the ideal, whence \pref{interval_entiers}.
\end{itemize}
\end{proof}

\subsection{Colored polynomials}
The main aim of this subsection is to prove Proposition~\ref{prop_col}. 
Let us first show that  formula~\pref{eq_comb_col} holds for all
positive integer values of $\ell$, hence it follows that we have a polynomial identity.
Recall that an \monem{$\ell$-path} is a finite sequence of points $p_i=(x_i,y_i)$
in the plane such that $p_0=(0,0)$ and
     $$p_{i+1} = \begin{cases}
     p_i+(\ell,0)& \text{or}, \\[4pt]
     p_i+(0,1).\end{cases}$$
We say that we have an \monem{$\ell$-Dyck path} if the path satisfies the further condition that $x_i\le y_i$ for all $i$.
Let us denote by $\D_n^{(\ell)}$ the set of $\ell$-Dyck paths of height $n\,\ell$.
It is well-known  that $\ell$-Dyck paths are enumerated by the \monem{Fuss-Catalan numbers}:
\begin{equation}
   \bleu{\# \D_n^{(\ell)} = {1\over\ell\, n+1}\binom{(\ell+1)n}{n}},
\end{equation}
appearing as the right-hand side of \pref{eq_comb_col}.
To relate this to the left-hand-side of \pref{eq_comb_col}, we consider
 \monem{$\ell$-colored Dyck paths} of height $n$, which are simply height $n$ Dyck path $\beta$ whose horizontal 
steps, at levels $k<n$, have been \monem{colored} by elements of the set $\{1,2,\dots,\ell\}$. Here, we assume that the colors of steps on a same level
are weakly increasing from left to right, according to the color order $1<2<\ldots<\ell$. In other words, the coloring is a function $\gamma $, associating to each horizontal step $s_i$ of $\beta$, a color $\gamma(s_i)$, in such a manner that
    $$\gamma(s_i)\leq \gamma(s_{i+1}),$$
 if both $s_i$ and $s_{i+1}$ are horizontal steps, hence inevitably at the same level.
Thus $\ell$-colored Dyck paths are pairs $(\beta,\gamma)$, consisting of a path  with its coloring.

We establish formula~\pref{eq_comb_col}  by building a bijection between
$\ell$-colored Dyck paths of height $n$,
and $\ell$-Dyck paths of height $n\,\ell$.
Given an $\ell$-colored Dyck path $(\beta,\gamma)$, 
we denote by $a_{kj}$  the number of level $k$ horizontal steps of color $j$,
and we iteratively construct a path 
\begin{equation} \label{defn_Phi}
    \Phi(\beta,\gamma)=q_0,q_1\ldots,
\end{equation}
 using this data. Starting with $q_0=(0,0)$,  and running through the $a_{kj}$'s as $k$ goes from $1$ to $n-1$ and $j$ goes from $1$ to $\ell$, we successively add to $\pi$
\begin{itemize}
    \item $a_{kj}$ horizontal steps  of length $\ell$, followed by  
    \item one vertical step $(0,1)$.
\end{itemize}
One easily check that this indeed results in an $\ell$-Dyck path of height $n\,\ell$.

This transformation is illustrated in Figure~\ref{fig_Phi} with $\{\rouge{\rm red},\vert{\rm green}\}$ as color set  (i.e.: $\ell=2$).
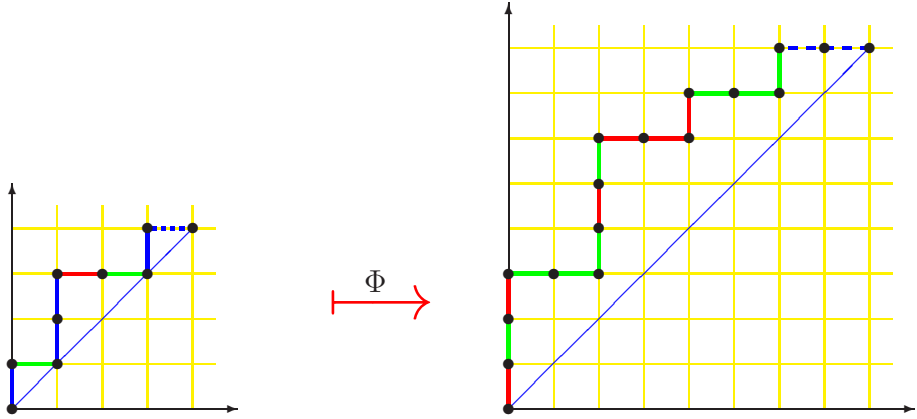
\begin{figure}[ht]\begin{center}\setlength{\unitlength}{6mm}
\begin{picture}(18,10)(1,0)
 \multiput(0,1)(0,1){4}{\jaune{\line(1,0){4.5}}}
  \multiput(1,0)(1,0){4}{\jaune{\line(0,1){4.5}}}
  \put(0,0){\vector(1,0){5}}
 \put(0,0){\vector(0,1){5}}
 \put(0,0){\bleu{\line(1,1){4}}}
{  \linethickness{.4mm}
 \put(0,0){\bleu{\line(0,1){1}}}
 \put(0,1){\vert{\line(1,0){1}}}
 \put(1,1){\bleu{\line(0,1){2}}}
 \put(1,3){\rouge{\line(1,0){1}}}
 \put(2,3){\vert{\line(1,0){1}}}
 \put(3,3){\bleu{\line(0,1){1}}}
\multiput(3.1,4)(0.2,0){5}{\bleu{\line(1,0){.1}}}
 \put(0,0){\circle*{.25}} \put(0,1){\circle*{.25}} \put(1,1){\circle*{.25}} \put(1,2){\circle*{.25}} \put(1,3){\circle*{.25}}\put(2,3){\circle*{.25}}
 \put(3,3){\circle*{.25}} \put(3,4){\circle*{.25}} \put(4,4){\circle*{.25}} 
\put(7.8,2.6){$\Phi$}
\put(7,2){\rouge{\Huge$\longmapsto$}}
}
 \multiput(11,1)(0,1){8}{\jaune{\line(1,0){8.5}}}
  \multiput(12,0)(1,0){8}{\jaune{\line(0,1){8.5}}}
 \put(11,0){\vector(1,0){9}}
 \put(11,0){\vector(0,1){9}}
 \put(11,0){\bleu{\line(1,1){8}}}
{  \linethickness{.5mm}
 \put(11,0){\rouge{\line(0,1){1}}}
 \put(11,1){\vert{\line(0,1){1}}}
 \put(11,2){\rouge{\line(0,1){1}}}
 \put(11,3){\vert{\line(1,0){2}}}
 \put(13,3){\vert{\line(0,1){1}}}
  \put(13,4){\rouge{\line(0,1){1}}}
  \put(13,5){\vert{\line(0,1){1}}}
 \put(13,6){\rouge{\line(1,0){2}}}
 \put(15,6){\rouge{\line(0,1){1}}}
 \put(15,7){\vert{\line(1,0){2}}}
 \put(17,7){\vert{\line(0,1){1}}}
  \multiput(17.2,8)(0.4,0){5}{\bleu{\line(1,0){.2}}}
  \put(11,0){ \put(0,0){\circle*{.25}} \put(0,1){\circle*{.25}} \put(0,2){\circle*{.25}} \put(0,3){\circle*{.25}} \put(1,3){\circle*{.25}}\put(2,3){\circle*{.25}}
 \put(2,4){\circle*{.25}} \put(2,5){\circle*{.25}} \put(2,6){\circle*{.25}} \put(3,6){\circle*{.25}} \put(4,6){\circle*{.25}} \put(4,7){\circle*{.25}}
 \put(5,7){\circle*{.25}} \put(6,7){\circle*{.25}} \put(6,8){\circle*{.25}} \put(7,8){\circle*{.25}}\put(8,8){\circle*{.25}}
 }
}
\end{picture}\end{center}
\caption{The transformation $\Phi$.}
\label{fig_Phi}
\end{figure}
It is easy to check that $\Phi$ is indeed a bijection. \qed

With the intention of giving a proof of Proposition~\ref{prop_col}, let us recall the following result of~\cite{aval_color} which generalizes the main result of \cite{ABB}.
\begin{lemma}\label{prop_aval_color}
A monomial basis of the quotient $\colored_n^{(\ell)}$ is given by the monomials $X^A$ such that $\pi(A)$ is an $\ell$-Dyck path.
\end{lemma}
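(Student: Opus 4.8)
The plan is to realize $\mathcal{E}:=\{X^A : \pi(A)\text{ is an }\ell\text{-Dyck path}\}$ as the set of standard monomials of the ideal $K$ for a well-chosen monomial order, so that the classical Macaulay argument (the quotient by $K$ and by its initial ideal $\mathrm{in}(K)$ have the same graded dimensions) delivers both spanning and linear independence simultaneously. Concretely, I fix a term order $\prec$ on $\poly=\QQ[X]$ that compares total degree first and then breaks ties lexicographically according to the entries reading word (columns left to right, each column top to bottom). The first computation is the leading term of each generator: in the definition \pref{col_mon} of $\overline{M}_C$, the competing monomials $\prod_{i,k} x_{i,a_{ik}}^{c_{ik}}$ differ only through the admissible index assignments, and the tightest, left-packed assignment (indices as small as the constraints $a_{ik}\le a_{i+1,k}$ and $a_{\ell k}<a_{1,k+1}$ allow) is $\prec$-maximal. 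This produces an explicit exponent matrix $A_C$ with $\mathrm{lt}(\overline{M}_C)=X^{A_C}$, and the combinatorial claim to verify is that, as $C$ ranges over all nonempty colored compositions, the $X^{A_C}$ generate the monomial ideal $\mathcal{N}$ whose standard monomials (those outside $\mathcal{N}$) are exactly the elements of $\mathcal{E}$; this is precisely the translation of ``left-packing the indices forces the path $\pi$ above the $\ell$-staircase.''

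With the leading terms in hand, the spanning half is a straightening argument. Since $\{\overline{M}_C\}$ is a linear basis of the colored quasi-symmetric polynomials, the constant-term-free ones generate $K$, so $\mathrm{in}(K)\supseteq\mathcal{N}$. Hence every monomial $X^B$ with $\pi(B)$ not an $\ell$-Dyck path is, modulo $K$, a combination of strictly $\prec$-smaller monomials; iterating this reduction (within each fixed degree $\prec$ is a well-order on the finitely many monomials) rewrites every class of $\colored_n^{(\ell)}$ as a combination of $\mathcal{E}$. This simultaneously shows that $\mathcal{E}$ spans and that the standard monomials of $K$ lie inside $\mathcal{E}$, giving $\dim_\QQ(\colored_n^{(\ell)})_d\le \#(\mathcal{E}\cap\text{degree }d)$ for every $d$.

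The reverse inequality --- linear independence of the images of the $\mathcal{E}$-monomials --- is the crux, and amounts to proving $\mathrm{in}(K)=\mathcal{N}$ exactly, i.e. that the constant-term-free $\overline{M}_C$ form a Gr\"obner basis of $K$. My plan is to check Buchberger's criterion combinatorially: an overlap of two leading monomials $X^{A_C},X^{A_{C'}}$ reflects an overlap of the colored compositions $C,C'$, and the corresponding S-polynomial should reduce to zero through a shuffle/straightening identity that re-expresses products and overlaps of the $\overline{M}_C$ in the monomial basis, while creating no new leading monomial outside $\mathcal{N}$. Establishing these reduction identities is where the real work lies, and it is exactly the content that \cite{aval_color} generalizes from \cite{ABB}. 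Once the Gr\"obner property is secured, the standard monomials are precisely $\mathcal{E}$, so $\mathcal{E}$ is a basis of $\colored_n^{(\ell)}$.

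Finally, I would keep a purely enumerative cross-check that is independent of the above, as a guard against losing monomials in the straightening: the encoding $\pi$ identifies $\mathcal{E}$ with $\D_n^{(\ell)}$, so $\#\mathcal{E}=\#\D_n^{(\ell)}=\frac{1}{\ell\, n+1}\binom{(\ell+1)n}{n}$ by the Fuss--Catalan evaluation already obtained through the bijection $\Phi$ and formula \pref{eq_comb_col}. Matching this total with the graded spanning count confirms consistency. I expect the S-polynomial confluence to be the main obstacle, since the leading-term computation, the spanning straightening, and the counting step are all routine once $X^{A_C}$ and its identification with $\mathcal{N}$ are pinned down.
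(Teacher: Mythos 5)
The first thing to note is that the paper does not actually prove this lemma: it is quoted verbatim from \cite{aval_color} (``let us recall the following result of \cite{aval_color} which generalizes the main result of \cite{ABB}''), so the burden you have taken on is to reconstruct that reference's proof. Your reconstruction, however, fails at its foundation. Your central combinatorial claim --- that the monomial ideal $\mathcal{N}$ generated by the leading terms $X^{A_C}$ of the generators $\overline{M}_C$ has complement exactly $\mathcal{E}$ --- is false. Under any order for which the left-packed assignment wins, the leading term of $\overline{M}_C$ is $X^{A_C}$ with $A_C=C\,\bz\cdots\bz$ ($C$ padded by zero columns); since the reading word of a colored composition must avoid $\ell$ consecutive zeros, the first column of $C$ is nonzero, so \emph{every} generator of $\mathcal{N}$ involves a variable from the first column of $X$. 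Consequently the monomial $x_{1,2}^2$ (exponent matrix with a single entry $2$ in position $(1,2)$) lies outside $\mathcal{N}$, yet $\pi$ of it is not an $\ell$-Dyck path (its two horizontal steps at height $\ell$ reach $x$-coordinate $2\ell>\ell$). So the complement of $\mathcal{N}$ strictly contains $\mathcal{E}$, your straightening reduction gets stuck at $x_{1,2}^2$, and the spanning half of the argument collapses. Worse, the Gr\"obner half cannot be repaired by ``checking Buchberger'': the criterion genuinely fails, because $\mathrm{in}(K)\supsetneq\mathcal{N}$. Concretely, for $\ell=1$, $n=2$, one has $M_2-x_1M_1+M_{11}=x_2^2$, an element of $K$ whose leading monomial $x_2^2$ is not in $\mathcal{N}$; the S-polynomial reductions you hope will confirm confluence instead produce exactly such new elements.

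What is actually required --- and what \cite{aval_color}, generalizing \cite{ABB}, does --- is to \emph{enlarge} the generating set: one recursively builds further elements of the ideal whose leading monomials supply the missing minimal non-Dyck monomials such as $x_{1,2}^2$. This is precisely the mechanism the present paper deploys for the quasi-invariant ideal $\JW$ in Definition~\ref{def:G} and Proposition~\ref{prop:Glex}: the polynomials $G[A]$, indexed by trans $r$-matrices, are recursive combinations of the monomial generators engineered so that the leading term of $G[A]$ is $X^A$; the colored setting needs the analogous construction. Even then, the leading-term computation only yields spanning (an upper bound on the graded dimensions); linear independence of the Dyck monomials is a separate, substantial step in the cited works and does not follow from your enumerative cross-check, which merely counts $\mathcal{E}$ rather than bounding $\dim\colored_n^{(\ell)}$ from below. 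Since you also explicitly defer ``the real work'' to \cite{aval_color}, the proposal is, in the end, a plan whose one concretely stated new claim is incorrect.
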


This last statement uses the following ``encoding'' of monomials $X^A$ in terms of lattice paths. The lattice path  $\pi(A)$ is obtained by applying the following construction to  entries-reading-word $w(A)$ of the $\ell\times n$ exponent matrix $A$.
Starting with the point $(0,0)$, for each entry $a$ of $w(A)$ we add $a$ horizontal steps $(\ell,0)$, followed by one vertical step $(0,1)$.
For instance,   Figure~\ref{fig_piA} represents the path $\pi(A)$ associated to the monomial
   $$X^{\big(\begin{smallmatrix}        1&1&0\\      0&2&1    \end{smallmatrix}\big)}=x_1\,x_2\,y_2^2\,y_3,$$
whose exponent matrix has entries-reading-word $101201$,  
 Observe that each horizontal step is of length two.
One associates variables to each level, namely $x_i$ at even level $2\,(i-1)$, and $y_i$ at odd level $2\,i-1$.
We then  readout the monomial from the path by the simple device
     of associating as exponent of the variable of a level, the number of horizontal steps on that level.
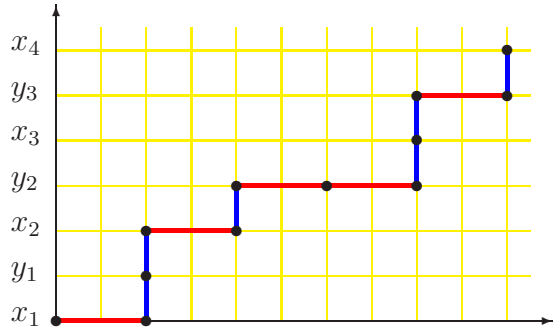
\begin{figure}[ht]\begin{center}\setlength{\unitlength}{6mm}

\begin{picture}(8,8)(0,0)
 \multiput(0,1)(0,1){6}{\jaune{\line(1,0){10.5}}}
  \multiput(1,0)(1,0){10}{\jaune{\line(0,1){6.5}}}
  \put(0,0){\vector(1,0){11}}
 \put(0,0){\vector(0,1){7}}
  \linethickness{.5mm}
 \put(0,0){\rouge{\line(1,0){2}}}
 \put(2,0){\bleu{\line(0,1){1}}}
 \put(2,1){\bleu{\line(1,0){0}}}
 \put(2,1){\bleu{\line(0,1){1}}}
 \put(2,2){\rouge{\line(1,0){2}}}
 \put(4,2){\bleu{\line(0,1){1}}}
 \put(4,3){\rouge{\line(1,0){4}}}
 \put(8,3){\bleu{\line(0,1){1}}}
 \put(8,4){\bleu{\line(1,0){0}}}
 \put(8,4){\bleu{\line(0,1){1}}}
 \put(8,5){\rouge{\line(1,0){2}}}
 \put(10,5){\bleu{\line(0,1){1}}}
 \put(0,0){\circle*{.25}} \put(2,0){\circle*{.25}} \put(2,1){\circle*{.25}}\put(2,2){\circle*{.25}}
 \put(4,2){\circle*{.25}} \put(4,3){\circle*{.25}}\put(6,3){\circle*{.25}}\put(8,3){\circle*{.25}}
 \put(8,4){\circle*{.25}} \put(8,5){\circle*{.25}} \put(10,5){\circle*{.25}}\put(10,6){\circle*{.25}}
 \put(-1,0){$x_1$} \put(-1,1){$y_1$}
\put(-1,2){$x_2$} \put(-1,3){$y_2$}
\put(-1,4){$x_3$} \put(-1,5){$y_3$}
\put(-1,6){$x_4$}  
 \end{picture}\end{center}
\caption{Example of $\pi(A)$ for $\ell=2$.}
\label{fig_piA}
\end{figure}

\begin{proof}[\bf Proof of Proposition~\ref{prop_col}.]  
For a given  Dyck path $\beta$, let us consider the set
$\mathcal{C}(\beta)$  of $\ell$-Dyck path $\pi$ such that there exists a coloring $\gamma$ with 
$\Phi(\beta,\gamma)=\pi$. In formula,
    $$\mathcal{C}(\beta)=\{\ \pi\ |\ \exists \gamma\ {\rm such\ that}\ \Phi(\beta,\gamma)=\pi\},$$
  with $\Phi$ as in~\pref{defn_Phi}.
If $a_k$ is the number of horizontal steps at level $k$ in $\beta$, the choice of $\ell$-coloring is equivalent to the choice of a monomial 
           $$x_{k1}^{a_{k1} }x_{k2}^{a_{k2}} \,\cdots\, x_{k\ell}^{a_{k\ell}} ,$$
 with $a_{kj}$ giving the number of steps getting to be colored $j$, hence $a_k=a_{k1}+\ldots+a_{k\ell}$.
The Hilbert series of the resulting set of monomial is $h_{a_k}(\mq)$.
Since there is independence in the choice of colorings at different levels,  the Hilbert series of the monomials associated to $\ell$-Dyck paths in $\mathcal{C}(\beta)$
is $h_{\nu(\beta)}(\mq)$.
The fact that $\Phi$ is a bijection gives the proof of Proposition~\ref{prop_col}, in view of Lemma~\ref{prop_aval_color}.
\end{proof}
\section{Open problems}
The main  remaining open question in all the above considerations is to find a explicit (even conjectural) candidate for partitions $\mu(\beta)$, one 
for each Dyck path $\beta$, which would explain the $h$-positive expansion in Proposition~\ref{mainprop}. Naturally, similar questions may be stated whenever the universal Hilbert series $\DQcoinv(\mq)$, for a group $W$, happens to be $h$-positive. As discussed in the paper, the resulting entirely combinatorial description of  the universal $\DQcoinv(\mq)$ would give, in one compact formula, the $GL_\ell$-action characters for all the spaces  $\DQcoinv^{\ell}$.


\begin{thebibliography}{10}   

\bibitem{aval_W} 
\auteur{J.-C.~Aval},
\titreref{Polyn\^omes quasi-invariants et super-coinvariants pour le groupe sym\'etrique g\'en\'eralis\'e}, 
Ann. Sci. Math. Qu\'ebec, 27 (2003), no. 2, 111--121.

\bibitem{aval_color} 
\auteur{J.-C.~Aval},
\titreref{ Ideals and quotients of B-quasisymmetric functions}, 
S\'eminaire Lotharingien de combinatoire, vol 54 (2006), B54d.

\bibitem{ABB} 
\auteur{J.-C.~Aval, F.~Bergeron and N.~Bergeron},
\titreref{Ideals of quasi-symmetric functions and Super-covariant polynomials for $S_n$}, 
Adv. in Math., 81 (2004), no. 2, 353--367.

\bibitem{BH} 
\auteur{P.~Baumann and C.~Hohlweg},
\titreref{A Solomon descent theory for the wreath products $G\wr S_n$}, 
Trans. Amer. Math. Soc. 360 (2008), 1475-1538.

\bibitem{livre} 
\auteur{F.~Bergeron},
\titreref{Algebraic Combinatorics and Coinvariant Spaces}, CMS Treatise in Mathematics, CMS and A.K.Peters,  2009.

\bibitem{bergeron_several} 
\auteur{F.~Bergeron},
\titreref{Multivariate Diagonal Coinvariant Spaces for Complex Reflection Groups}, submitted.
(see arXiv:1105.4358v1)


 \bibitem{etingof}
 \auteur{P. Etingof and E. Strickland},
 \titreref{Lectures on quasi-invariants of Coxeter groups and the Cherednik algebra},
Enseign. Math. \vol{49} (2003), 35--65.

 \bibitem{feigin}
  \auteur{M. Feigin and A. P. Veselov}, 
 \titreref{Quasi-invariants of Coxeter groups and m-harmonic polynomials}, 
 Int. Math. Res. Not. 10 (2002), 521--545.


\bibitem{hivert}
\auteur{F. Hivert},
\titreref{Hecke algebras, difference operators, and
  quasi-symmetric functions\/}, Adv. in Math., {\bf 155} (2000),
  181--238.

\bibitem{macdonald} 
\auteur{I.~G. Macdonald},
\titreref{Symmetric functions and {H}all polynomials}, second ed., Oxford
  Mathematical Monographs, The Clarendon Press Oxford University Press, New
  York, 1995, With contributions by A. Zelevinsky, Oxford Science Publications.
  



\end{thebibliography}
\end{document}